\def\NAT@def@citea{\def\@citea{\NAT@separator}}% Suppress spaces between citations using natbib.sty
\theoremstyle{plain}% Theorem-like structures provided by amsthm.sty
\newtheorem{theorem}{Theorem}[section]
\newtheorem{corollary}[theorem]{Corollary}
\newtheorem{proposition}[theorem]{Proposition}
\theoremstyle{definition}
\newtheorem{definition}[theorem]{Definition}
\theoremstyle{remark}
\newtheorem{remark}[theorem]{Remark}
\def\be{\begin{equation}}
\def\ee{\end{equation}}
\def\beaN{\setlength{\arraycolsep}{0.0em}\begin{eqnarray*}}
\def\eeaN{\end{eqnarray*}\setlength{\arraycolsep}{5pt}}
\def\bea{\setlength{\arraycolsep}{0.0em}\begin{eqnarray}}
\def\eea{\end{eqnarray}\setlength{\arraycolsep}{5pt}}
\def\mod{{\rm mod}\,}
\def\bks{\backslash}
\begin{document}

\title{Invertibility of circulant matrices of arbitrary size}

\author{
\name{Jeong-Ok Choi\textsuperscript{a,} \textsuperscript{b} and Youngmi Hur\textsuperscript{c}}
\affil{\textsuperscript{a}Division of Liberal Arts and Sciences, Gwangju Institute of Science and Technology, Gwangju, South Korea; \textsuperscript{b}Discrete Mathematics Group, Institute for Basic Science (IBS), Daejeon, South Korea; \textsuperscript{c}Department of Mathematics, Yonsei University, Seoul, South Korea}
}

\maketitle

\begin{abstract}
In this paper, we present sufficient conditions to guarantee the invertibility of rational circulant matrices with any given size. These sufficient conditions consist of linear combinations of the entries in the first row with integer coefficients. Our result is general enough to show the invertibility of circulant matrices with any size and arrangement of entries. For example, using these conditions, we show the invertibility of the family of circulant matrices with particular forms of integers generated by a primitive element in $\mathbb{Z}_p$. Also, using a combinatorial structure of these sufficient conditions, we show invertibility for circulant $0, 1$-matrices.
\end{abstract}

\begin{keywords}
Circulant matrix, Cyclotomic polynomial, Ramanujan's Sum
\end{keywords}

\begin{amscode}
15B05, 15B36, 11A07
\end{amscode}
%15B05: Toeplitz, Cauchy, and related matrices
%15B36: Matrices of integers
%11A07: Congruences; primitive roots; residue systems

%\vskip 0.15in

\section{Introduction}

In this paper, we study the invertibility of circulant matrices of arbitrary size. Our research is motivated by a specific type of matrices that the second author encountered in an earlier research \cite{GMD,ZachThesis}. We denote these matrices by $A_{p,m}$ in this paper. The matrices $A_{p,m}$ are permutation similar to some circulant matrices which we call $G_{p,m}$ (c.f. Section~\ref{subS:GMD}). The determinant of $A_{p,m}$ is a generalization of the Maillet determinant \cite{TM,HWT,CO,CO2}, and plays an essential role in determining a certain type of approximation power for the multivariate wavelets constructed by a method called {\it prime coset sum} \cite{PCS}. Although further developments on the approximation power of the prime coset sum wavelets are made, the question about the invertibility of matrices $A_{p,m}$ is mostly left unanswered.

For a positive integer $m$ and an odd prime $p$, $A_{p,m}$ is the $(p-1)$ by $(p-1)$ matrix of the form
\begin{equation}
\label{eq:Apm}
A_{p,m}:=\left[((i^{-1}(\mod\; p)\cdot j)\,(\mod\; p))^{m}\right]_{i,j=1}^{p-1}
\end{equation}
where $i^{-1}\;(\mod\; p)$ is the multiplicative inverse of $i$ in the field $\mathbb{Z}_{p}$, and the exponentiation outside of the parentheses is taken as in the integers. 

The invertibility of circulant matrices under various constraints has been studied in the literature. 
The exact form of eigenvalues of circulant matrices is known (c.f. Theorem~\ref{thm:ams}, \cite{kra12,HJ}) and is used to provide a sufficient condition for a circulant matrix to be invertible, but many of these existing conditions cannot be directly applied to determine the invertibility of the circulant matrix $G_{p,m}$ (hence the invertibility of the above matrix $A_{p,m}$). For example, a circulant matrix needs to be of size $n$ by $n$ with prime $n$ for a result in \cite{kra12} (c.f. Corollary~\ref{cor:q}), and its entries need to be ``unbalanced'' for a result in \cite{HJ} (c.f. discussion after Theorem~\ref{thm:ams}). However, the size of $A_{p,m}$ is $(p-1)$ by $(p-1)$ with odd prime $p$, and its size is never a prime except for the trivial case $p=3$. The number $p-1$ with prime $p$ does not have any specific kind of factorization. Hence we need a method to tell us about the invertibility for a circulant matrix of arbitrary size. 

The main contribution of our paper is to provide a method for determining the invertibility for such an arbitrary size circulant matrix when its entries are rational numbers. Our method determines the invertibility, not just for specific types of circulant matrices such as $G_{p,m}$, but circulant matrices without particular patterns. Another family of circulant matrices studied in the literature is circulant $0, 1$-matrices that have useful applications and have a combinatorially interesting structure on their own. In \cite{so06}, the author characterized graphs with circulant adjacency matrices with integral spectra using the eigenvalue formula mentioned above and cyclotomic polynomials, but this characterization cannot tell whether those adjacency matrices are invertible or not. In Section~\ref{subS:01circulant}, we introduce a known result on invertibility of circulant $0, 1$-matrices with specific constraints (\cite{chen21}) and provide a related new result. 
\medskip

The rest of this paper is organized as follows. In Section~\ref{sec:Prelim}, we review elementary topics necessary for our main results. Section~\ref{sec:Main} shows our main theorem providing a sufficient condition for being invertible circulant matrices of arbitrary sizes with rational entries. Section~\ref{sec:Explicit} presents explicit conditions for various sizes, readily applied to any circulant matrices with rational entries. Using the conditions obtained in Section~\ref{sec:Explicit}, we apply them for different circulant matrices to show their invertibility in Section~\ref{sec:Appl}.

%\vskip 0.15in

\section{Preliminaries}
\label{sec:Prelim}
In this section, we review terminologies and some results in abstract algebra and elementary number theory used for our main results on the invertibility of circulant matrices. The topics in this section are not necessarily related to one another, but we will use them together for our main result.
\medskip

\begin{definition} For a natural number $n$, the value of $\phi(n)$ is the number of positive integers $d$ up to $n$ that are relatively prime to $n$, i.e. $(d,n)=1$. The function $\phi: \mathbb{N} \longrightarrow \mathbb{N}$ is called the {\it Euler's totient function}. \end{definition}

\begin{definition} Let $w_1, w_2, \cdots, w_{\phi(n)}$ be the primitive $n$-th roots of unity. The polynomial $(x - w_1)(x-w_2) \cdots (x - w_{\phi(n)})$ is called the $n$-th {\it cyclotomic polynomial} over $\mathbb{Q}$ and denoted by $\Phi_n(x)$. \end{definition}

%\vskip 0.1in
We refer \cite{lang} and \cite{gallian} for basic and well-known results on cyclotomic polynomials and extension fields, which can be found in algebra textbooks. 
%\vskip 0.1in

 \begin{theorem}[\cite{lang}]
\label{thm:lang} For every positive integer $n$, $x^n - 1  = \prod_{d \mid n} \Phi_d(x)$, where the product is taken over all positive divisors $d$ of $n$.
\end{theorem}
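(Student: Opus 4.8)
The plan is to prove the identity by partitioning the $n$-th roots of unity according to their multiplicative order and then regrouping the corresponding linear factors. The whole argument takes place over $\CC$, where every root of unity lives.

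First I would record the complete factorization of the left-hand side. The polynomial $x^n - 1$ is monic of degree $n$, and its $n$ roots are precisely the $n$-th roots of unity $1, \omega, \omega^2, \dots, \omega^{n-1}$ with $\omega = e^{2\pi i/n}$; these are $n$ \emph{distinct} complex numbers, so $x^n - 1$ is separable and splits as
\be
x^n - 1 = \prod_{\zeta^n = 1} (x - \zeta),
\ee
the product running over all $n$-th roots of unity, each appearing with multiplicity one.

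The key step is to classify each $n$-th root of unity by its order. For any $\zeta$ with $\zeta^n = 1$, let $d$ be its multiplicative order; by the division algorithm $d \mid n$, and $\zeta$ is then by definition a primitive $d$-th root of unity. Conversely, if $d \mid n$ and $\zeta$ is a primitive $d$-th root of unity, then $\zeta^n = (\zeta^d)^{n/d} = 1$, so $\zeta$ is among the $n$-th roots of unity. Consequently the set $\{\zeta : \zeta^n = 1\}$ is the disjoint union, over the positive divisors $d$ of $n$, of the sets of primitive $d$-th roots of unity. I would emphasize that the union is genuinely disjoint because each $\zeta$ has a single well-defined order, which is exactly the subtlety one must nail down carefully: that "order divides $n$'' and "primitive $d$-th root for $d\mid n$ is an $n$-th root'' are inverse correspondences with no overlap between different $d$.

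Finally I would regroup the factorization according to this partition:
\be
x^n - 1 = \prod_{\zeta^n = 1} (x - \zeta) = \prod_{d \mid n}\ \prod_{\zeta \text{ primitive } d\text{-th root}} (x - \zeta) = \prod_{d \mid n} \Phi_d(x),
\ee
where the last equality is the definition of $\Phi_d(x)$. The main obstacle is really only the bookkeeping in the middle step; once the bijection between $n$-th roots of unity and pairs $(d, \text{primitive } d\text{-th root})$ is established, the rest is formal. As a consistency check one may compare degrees, which recovers the classical identity $\sum_{d \mid n} \phi(d) = n$, matching $\deg \Phi_d = \phi(d)$ against $\deg(x^n-1) = n$.
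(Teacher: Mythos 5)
Your proof is correct and is the standard argument for this classical fact: the paper itself states the theorem as a cited result from the literature without proof, and your partition of the $n$-th roots of unity by multiplicative order, followed by regrouping the linear factors over divisors $d \mid n$, is exactly the canonical textbook proof (matching the paper's definition of $\Phi_d(x)$ as the product of $x - w$ over primitive $d$-th roots $w$). The degree count recovering $\sum_{d \mid n}\phi(d) = n$ is a sound consistency check.
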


For a field $F$ and an element $\alpha$ in an extension field $E$ of $F$, we denote by $F(\alpha)$ the smallest subfield of $E$ containing $F$ and $\alpha$. 

\begin{theorem}[\cite{gallian}]
\label{thm:gallian}
Let $F$ and $F'$ be fields, let $p(x) \in F[x]$ be irreducible over $F$, and let $\alpha$ be a zero of $p(x)$ in some extension of $F$. If $\varphi$ is a field isomorphism from $F$ to $F'$ and $\beta$ is a zero of $\varphi(p(x))$ in some extension of $F'$, then there is an isomorphism from $F(\alpha)$ to $F'(\beta)$ that agrees with $\varphi$ on $F$ and carries $\alpha$ to $\beta$. 
\end{theorem}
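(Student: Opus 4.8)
The plan is to route the entire statement through the quotient-ring description of a simple algebraic extension, which reduces it to the formal observation that $\varphi$ is an isomorphism. First I would invoke the standard fact that, since $p(x)$ is irreducible over $F$ and $\alpha$ is a zero of $p(x)$, the evaluation homomorphism $F[x] \to F(\alpha)$ with $x \mapsto \alpha$ is surjective with kernel exactly $\langle p(x) \rangle$; because $\langle p(x)\rangle$ is a maximal ideal, the quotient is a field, so this map descends to an isomorphism $\sigma_\alpha \colon F[x]/\langle p(x)\rangle \to F(\alpha)$ that fixes $F$ and sends the coset of $x$ to $\alpha$ (in particular $F(\alpha) = F[\alpha]$).

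Next I would extend $\varphi$ coefficientwise to a ring isomorphism $\Phi \colon F[x] \to F'[x]$ that acts as the identity on the indeterminate $x$ and as $\varphi$ on constants. Since $\Phi$ is an isomorphism carrying $p(x)$ to $\varphi(p(x))$, it maps the ideal $\langle p(x)\rangle$ bijectively onto $\langle \varphi(p(x))\rangle$; in particular $\varphi(p(x))$ is irreducible over $F'$, and $\Phi$ induces an isomorphism of quotients $\overline{\Phi} \colon F[x]/\langle p(x)\rangle \to F'[x]/\langle \varphi(p(x))\rangle$ agreeing with $\varphi$ on constant cosets and fixing the coset of $x$. Because $\beta$ is a zero of the now-irreducible polynomial $\varphi(p(x))$ over $F'$, the same quotient construction as in the first step yields an isomorphism $\sigma_\beta \colon F'[x]/\langle \varphi(p(x))\rangle \to F'(\beta)$ fixing $F'$ and sending the coset of $x$ to $\beta$. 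The composite $\sigma_\beta \circ \overline{\Phi} \circ \sigma_\alpha^{-1} \colon F(\alpha) \to F'(\beta)$ is then the map I claim works.

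The only thing requiring care — really bookkeeping rather than a genuine obstacle — is checking both compatibility conditions at once for this single composite. On $F$, each of $\sigma_\alpha^{-1}$, $\overline{\Phi}$, $\sigma_\beta$ has the prescribed behavior (identity into constant cosets, then $\varphi$, then identity out), so the composite restricts to $\varphi$; and $\alpha$, being $\sigma_\alpha$ of the coset of $x$, is carried by $\overline{\Phi}$ to the coset of $x$ in $F'[x]/\langle\varphi(p(x))\rangle$ and then by $\sigma_\beta$ to $\beta$. Since each factor is a bijection and $\sigma_\beta$ is onto $F'(\beta)$, the composite is an isomorphism extending $\varphi$ and sending $\alpha$ to $\beta$, as required.
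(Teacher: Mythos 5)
Your proof is correct and complete: the quotient-ring route (the evaluation isomorphisms $\sigma_\alpha$, $\sigma_\beta$ together with the coefficientwise extension $\Phi \colon F[x] \to F'[x]$ and the induced map $\overline{\Phi}$ on quotients) is precisely the standard argument for this isomorphism-extension theorem, and your bookkeeping on both compatibility conditions (restriction to $\varphi$ on $F$, and $\alpha \mapsto \beta$) is accurate. The paper itself offers no proof---it quotes the result from the cited textbook \cite{gallian}---and your argument is essentially that textbook's own, so there is nothing to compare beyond noting agreement.
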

%\vskip 0.25in

Let $1, w, w^2, \cdots, w^{n-1}$ be all the $n$-th root of unity. In other words, $w$ is a primitive $n$-th root of unity. For each integer $n > 1$, we consider $U(n) = \{ 1 \le d \le n \colon (d, n) = 1 \}$. Then its size is $\phi(n)$ and it is a group under multiplication modulo $n$. 

\medskip

For each $k \in U(n)$, $w^k$ is a primitive $n$-th root of unity and it is a root of the cyclotomic polynomial $\Phi_n(x)$. It is known that every cyclotomic polynomial is irreducible over $\mathbb{Q}$. So by letting $F = F' = \mathbb{Q}$ in Theorem~\ref{thm:gallian}, there exists a field automorphism of $\mathbb{Q}(w) = \mathbb{Q}(w^k)$ that maps $w$ to $w^k$, and its restriction on $\mathbb{Q}$ is the identity. Hence we obtain the following result. 

\begin{proposition}
\label{prop:cy}
 In any cyclotomic polynomial $\Phi_t(x)$, if $\epsilon_1, \epsilon_2, \cdots, \epsilon_{\phi(t)}$ are the zeros, then for each $j$ there exists an automorphism $f_j$ of $\mathbb{Q}(\epsilon_1)$ such that $f_j(\epsilon_1) = \epsilon_j$ fixing $\mathbb{Q}$. 
\end{proposition}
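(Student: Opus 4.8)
The plan is to read the result off directly from Theorem~\ref{thm:gallian}, using the two structural facts recalled just above the statement: every zero of $\Phi_t(x)$ is a primitive $t$-th root of unity, and $\Phi_t(x)$ is irreducible over $\mathbb{Q}$.

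First I would fix the primitive $t$-th root of unity $w := \epsilon_1$ and note that each remaining zero $\epsilon_j$ is again a primitive $t$-th root of unity, hence of the form $\epsilon_j = w^{k}$ for some $k \in U(t)$. Next I would invoke Theorem~\ref{thm:gallian} with $F = F' = \mathbb{Q}$, $p(x) = \Phi_t(x)$, $\varphi = \mathrm{id}_{\mathbb{Q}}$, $\alpha = \epsilon_1$, and $\beta = \epsilon_j$. Since $\varphi$ is the identity we have $\varphi(\Phi_t(x)) = \Phi_t(x)$, and $\epsilon_j$ is by hypothesis a zero of this polynomial; as $\Phi_t(x)$ is irreducible over $\mathbb{Q}$, the theorem applies and furnishes an isomorphism $f_j \colon \mathbb{Q}(\epsilon_1) \to \mathbb{Q}(\epsilon_j)$ that restricts to the identity on $\mathbb{Q}$ and satisfies $f_j(\epsilon_1) = \epsilon_j$.

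The one point still needing an argument is that $f_j$ is an \emph{automorphism} of $\mathbb{Q}(\epsilon_1)$, i.e. that the codomain $\mathbb{Q}(\epsilon_j)$ coincides with $\mathbb{Q}(\epsilon_1)$. Here I would use $\epsilon_j = w^{k}$: on the one hand $\epsilon_j \in \mathbb{Q}(\epsilon_1)$, giving $\mathbb{Q}(\epsilon_j) \subseteq \mathbb{Q}(\epsilon_1)$; on the other hand, since $(k,t)=1$ there is $k'$ with $kk' \equiv 1 \pmod{t}$, whence $w = w^{kk'} = \epsilon_j^{\,k'} \in \mathbb{Q}(\epsilon_j)$, yielding the reverse inclusion. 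Thus $\mathbb{Q}(\epsilon_1) = \mathbb{Q}(\epsilon_j)$ and $f_j$ is the desired automorphism.

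I do not anticipate a genuine obstacle. The substantive input is entirely contained in Theorem~\ref{thm:gallian} together with the irreducibility of $\Phi_t(x)$, both quoted above, and the only ingredient not handed to us directly is the equality $\mathbb{Q}(\epsilon_1) = \mathbb{Q}(\epsilon_j)$, which is the short units-mod-$t$ computation just described. If anything is delicate, it is only making explicit that $\varphi = \mathrm{id}_{\mathbb{Q}}$ forces $\varphi(\Phi_t(x)) = \Phi_t(x)$, so that $\epsilon_1$ and $\epsilon_j$ really are roots of the \emph{same} irreducible polynomial and the extension theorem is legitimately applicable.
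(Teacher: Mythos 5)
Your proof is correct and follows essentially the same route as the paper: the paper likewise derives the proposition from Theorem~\ref{thm:gallian} with $F = F' = \mathbb{Q}$ and the irreducibility of $\Phi_t(x)$, identifying the zeros as $w^k$ with $k \in U(t)$. The only difference is that you explicitly verify $\mathbb{Q}(\epsilon_1) = \mathbb{Q}(\epsilon_j)$ via the inverse of $k$ modulo $t$, a step the paper simply asserts by writing $\mathbb{Q}(w) = \mathbb{Q}(w^k)$ -- a worthwhile detail to spell out.
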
 

Note that, without loss of generality, we may take the $d$-th primitive root of unity to be $e^{{2\pi i}/{d}}$.

\begin{definition}
For any positive integers $d$ and $n$, the sum of the $n$-th power of $d$-th primitive root of unity is called the {\it Ramanujan's sum} and denoted by $C_d(n)$. In other words,
%\[C_d(n) = \sum_{1\le a \le d \atop (a, d) = 1} e^{2\pi i \frac{a}{d}n} \ \ . \]
\[C_d(n) =  \sum_{\genfrac{}{}{0pt}{}{1\le a \le d}{(a, d) = 1}}e^{2\pi i \frac{a}{d}n} \ \ . \]
%\[C_d(n) = \sum_{a\in U(d)}e^{2\pi i \frac{a}{d}n} \ \ . \]
\end{definition}

It is trivial to see that $C_d(n)$ is periodic modulo $d$: $C_d(n) = C_d(n+d)$ for every $d, n \ge 1$. Clearly, $C_1(n)=1$ for every $n \in \mathbb{N}$. There is no known explicit formula for $C_d(n)$ for arbitrary natural numbers $d$ and $n$ in general, but some formulas are known. We list relevant properties that we will use later. 

\medskip

%\vskip 0.15in

{\bf Some known properties about Ramanujan's sum} (See \cite{nicol62, lucht10, anderson53, hardy27}.)
\begin{enumerate}[label=(\arabic*)]
\item 
$C_d(n)$ is an integer for all $d, n \in \mathbb{N}$. 
\item
$C_d(d) = \phi(d)$ for every $d \in \mathbb{N}$.
\item \label{Ram:prod}
If $(d, d') = 1$, then $C_d(n)C_{d'}(n) = C_{d{d'}}(n)$ for every  $n \in \mathbb{N}$.
\item \label{Ram:power}
For any prime number $p$ and natural number $k$,
\[ C_{p^k}(n)  = \begin{cases} 0 \ \ & \text{ if } p^{k-1} \nmid n \\
-p^{k-1} \ \ & \text{ if } p^{k-1} \mid n  \ \text{ and } p^k \nmid n \\
\phi(p^k) \ \ & \text{ if } p^k \mid n  
\end{cases} \]
\end{enumerate} 

\medskip
Let $h$ be a {\it primitive element} of $\mathbb{Z}_p$ for a prime number $p$. That is, $h$ is a generator for the multiplicative group $\mathbb{Z}_p\setminus \{0\}$. It is well-known that $\mathbb{Z}_p$ has a primitive element if $p$ is prime and that $\{ h^{t} (\mod p) \colon t \in U(p-1) \}$ is the set of all primitive elements of $\mathbb{Z}_p$. Furthermore, 
\begin{theorem}[\cite{park00}]
\label{thm:primitive}
If $p$ and $q$ are prime such that $p = 4q+1$, then $2$ is a primitive element of $\mathbb{Z}_p$. 
\end{theorem}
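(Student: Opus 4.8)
The plan is to prove that the multiplicative order of $2$ in $\mathbb{Z}_p\setminus\{0\}$, a group of order $p-1=4q$, is exactly $4q$; this is equivalent to $2$ being a primitive element. First I would record that $q$ must be odd: if $q=2$ then $p=9$ is not prime, so the hypothesis forces $q\ge 3$ odd, and in particular the divisors of $4q$ are exactly $1,2,4,q,2q,4q$. The order of $2$ is a divisor of $4q$, and a direct check shows that every \emph{proper} divisor of $4q$ divides either $4$ or $2q$ (indeed $1,2,4 \mid 4$ and $q,2q \mid 2q$). Hence the order equals $4q$ precisely when $2^{4}\not\equiv 1\pmod p$ and $2^{2q}\not\equiv 1\pmod p$, and the whole problem reduces to verifying these two non-congruences.

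The first non-congruence is immediate: $2^4\equiv 1\pmod p$ would force $p\mid 15$, impossible since $p=4q+1\ge 13$. The substance of the argument is the second one, $2^{2q}\not\equiv 1\pmod p$. Since $2q=(p-1)/2$, I would apply Euler's criterion to obtain $2^{2q}=2^{(p-1)/2}\equiv\left(\frac{2}{p}\right)\pmod p$, where $\left(\frac{2}{p}\right)$ is the Legendre symbol. Thus $2^{2q}\not\equiv 1$ holds exactly when $2$ is a quadratic non-residue modulo $p$, i.e.\ when $\left(\frac{2}{p}\right)=-1$.

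The remaining step, which I regard as the crux, is to evaluate this Legendre symbol using the residue of $p$ modulo $8$. Writing $q=2k+1$ (valid since $q$ is odd) gives $p=4q+1=8k+5$, so $p\equiv 5\pmod 8$. By the second supplement to the law of quadratic reciprocity, $\left(\frac{2}{p}\right)=-1$ whenever $p\equiv 3,5\pmod 8$; since $p\equiv 5\pmod 8$ this yields $\left(\frac{2}{p}\right)=-1$, hence $2^{2q}\equiv -1\not\equiv 1\pmod p$. Combining the two non-congruences, the order of $2$ cannot be a proper divisor of $4q$, so it equals $4q=p-1$ and $2$ is a primitive element of $\mathbb{Z}_p$. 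The main obstacle is recognizing that the seemingly order-theoretic statement collapses, via Euler's criterion, to a single Legendre-symbol computation, and that the hypothesis $p=4q+1$ with $q$ an odd prime is exactly engineered to place $p$ in the residue class $5\pmod 8$ where $2$ is a non-residue.
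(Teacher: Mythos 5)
Your proof is correct, but note that the paper itself contains no proof of this statement to compare against: Theorem~\ref{thm:primitive} is imported as a known result from the cited reference \cite{park00} and used as a black box. So the only question is whether your argument stands on its own, and it does. The reduction is sound: $q=2$ would give $p=9$, so $q$ is an odd prime, the divisors of $4q=2^2q$ are exactly $1,2,4,q,2q,4q$, and every proper divisor divides $4$ or $2q$; hence primitivity of $2$ is equivalent to the two non-congruences $2^{4}\not\equiv 1 \pmod p$ and $2^{2q}\not\equiv 1\pmod p$. The first holds because $2^4\equiv 1\pmod p$ would force $p\mid 15$ while $p\ge 13$; for the second, Euler's criterion identifies $2^{2q}=2^{(p-1)/2}$ with the Legendre symbol $\left(\frac{2}{p}\right)$ modulo $p$, and writing $q=2k+1$ gives $p=8k+5\equiv 5\pmod 8$, so the second supplement to quadratic reciprocity yields $\left(\frac{2}{p}\right)=-1$, i.e.\ $2^{2q}\equiv -1\pmod p$. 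This is the standard (and essentially canonical) proof of this classical fact, and it correctly isolates the role of the hypothesis: $p=4q+1$ with $q$ odd forces $p\equiv 5\pmod 8$, which is precisely where $2$ is a quadratic non-residue. The only cosmetic remark is that your two cases could be phrased uniformly as checking $2^{4q/\ell}\not\equiv 1\pmod p$ for the two prime divisors $\ell\in\{2,q\}$ of $p-1$, but that changes nothing of substance.
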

Using this theorem, we obtain the following proposition.

\begin{proposition}
\label{pro:primitive}
If $p$ and $q$ are prime such that $p = 4q+1$, then the set of all primitive elements of $\mathbb{Z}_p$ is $ \{ 2^t (\mod\; p) \colon t \text{ is odd and } t \neq q, 3q \}$. 
\end{proposition}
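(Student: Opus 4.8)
The plan is to combine Theorem~\ref{thm:primitive} with the general description of primitive elements recalled just before it, namely that once a single primitive element $h$ of $\mathbb{Z}_p$ is fixed, the complete set of primitive elements equals $\{h^t \,(\mod\; p) \colon t \in U(p-1)\}$. Since Theorem~\ref{thm:primitive} asserts that $2$ is a primitive element of $\mathbb{Z}_p$ whenever $p = 4q+1$ is prime, I would take $h = 2$, so the set of all primitive elements is $\{2^t \,(\mod\; p) \colon t \in U(p-1)\}$. As $p - 1 = 4q$, the whole problem then reduces to computing the set $U(4q) = \{1 \le t \le 4q \colon (t, 4q) = 1\}$ explicitly and matching it against the description in the statement.

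First I would note that $q$ must be odd: if $q = 2$ then $p = 9$ is not prime, contradicting the hypothesis, so $q$ is an odd prime and $4q = 2^2 \cdot q$ is its prime factorization. It follows immediately that $(t, 4q) = 1$ holds if and only if $t$ is odd and $q \nmid t$. Next I would translate the divisibility condition into the two excluded values: among $1 \le t \le 4q$ the multiples of $q$ are $q, 2q, 3q, 4q$, and of these $2q$ and $4q$ are even (hence already discarded by the oddness requirement), while $q$ and $3q$ are odd, being products of odd numbers. Therefore, once attention is restricted to odd $t$, the condition $q \nmid t$ is equivalent to $t \neq q$ and $t \neq 3q$. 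Combining the two observations gives $U(4q) = \{t \colon t \text{ odd and } t \neq q, 3q\}$, and substituting this into the description above yields exactly the claimed set $\{2^t \,(\mod\; p) \colon t \text{ odd and } t \neq q, 3q\}$.

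I do not anticipate a genuine obstacle here, since the argument is just an explicit reading of the totient structure of $4q$ placed on top of the already-established fact that $2$ is primitive; as a consistency check one sees that the number of admissible $t$ is $2q - 2 = \phi(4q)$, as it must be. The only points needing care are to fix the range by taking exponents modulo $p - 1 = 4q$, so that distinct admissible $t$ correspond to distinct primitive elements with no double-counting, and to confirm at the outset that $q$ is odd, which is precisely what guarantees that $q$ and $3q$ are the two odd multiples of $q$ to be removed.
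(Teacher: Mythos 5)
Your proposal is correct and follows essentially the same route as the paper's proof: invoke Theorem~\ref{thm:primitive} to take $h=2$, use the standard fact that the primitive elements are $\{2^t \,(\mathrm{mod}\; p) : t \in U(p-1)\}$, and identify $U(4q)$ as the odd integers up to $4q$ not divisible by $q$, i.e.\ those different from $q$ and $3q$. Your write-up is somewhat more explicit than the paper's (noting that $q$ must be odd and verifying the count $2(q-1)=\phi(4q)$), but the underlying argument is identical.
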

\begin{proof}
By Theorem~\ref{thm:primitive}, $2$ is a primitive element of $\mathbb{Z}_p$. Since $U(p-1) = U(4q)$, a natural number $l$ is in $U(p-1)$ iff $(l, 4q) = 1$. Hence $U(p-1)$ consists of all odd numbers up to $4q$ that are not divisible by $q$.
\end{proof}

Note that in the above case the number of primitive elements of $\mathbb{Z}_p$ for prime $p$ is $ |U(p-1)|  =   2(q-1)$.

\medskip

Another result that we make use of in proving the next proposition and throughout this paper is the following straightforward identity (c.f. \cite{ZachThesis}): for an odd prime $p$,
\begin{equation}
\label{eq:relation}
(h^{k}(\mod\; p))+(h^{(p-1)/2+k}(\mod\; p))=p, \quad \hbox{ for } k=1,\ldots,(p-1)/2.%=(h^{k}(\mod\; p))+(p-h^{k}(\mod\; p))=p.
\end{equation}

\begin{proposition}
\label{pro:pri2}
If $p$ and $q$ are prime such that $p = 4q+1$, then for every primitive element $h$ of $\mathbb{Z}_p$, $\{ h^q (\mod\; p), h^{3q} (\mod\; p) \} = \{  2^q (\mod\; p), 2^{3q} (\mod\; p)  \}$.
\end{proposition}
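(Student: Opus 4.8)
The plan is to work inside the multiplicative group $\mathbb{Z}_p \setminus \{0\}$, which is cyclic of order $p-1 = 4q$, and to show that both two-element sets appearing in the claim coincide with the set of elements of order $4$ in this group. Since a primitive element $h$ is by definition a generator of $\mathbb{Z}_p \setminus \{0\}$, and since Theorem~\ref{thm:primitive} guarantees that $2$ is also such a generator, it suffices to prove that for \emph{any} generator $g$ of a cyclic group of order $4q$ the pair $\{g^q, g^{3q}\}$ is exactly the set of order-$4$ elements. Applying this statement once to $g = h$ and once to $g = 2$ then forces the equality $\{h^q (\bmod\; p), h^{3q} (\bmod\; p)\} = \{2^q (\bmod\; p), 2^{3q} (\bmod\; p)\}$.

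First I would compute the orders of $g^q$ and $g^{3q}$. In a cyclic group of order $4q$ with generator $g$, the order of $g^k$ is $4q/\gcd(k, 4q)$. Since $\gcd(q, 4q) = q$ and $\gcd(3q, 4q) = q$, both $g^q$ and $g^{3q}$ have order exactly $4$. Next I would verify that they are distinct: if $g^q = g^{3q}$, then $g^{2q} = 1$, forcing $4q \mid 2q$, which is impossible. Hence $\{g^q, g^{3q}\}$ consists of two distinct elements of order $4$.

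To finish, I would invoke the structure of cyclic groups: a cyclic group of order $4q$ has a unique subgroup of order $4$, and the number of elements of order exactly $4$ in it is $\phi(4) = 2$. Therefore $\{g^q, g^{3q}\}$, being a two-element subset of the two-element set of order-$4$ elements, must equal that whole set. Because this set of order-$4$ elements is intrinsic to the group and independent of the chosen generator, specializing to $g = h$ and to $g = 2$ yields the desired equality.

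There is no serious obstacle here; the only points requiring care are the order computations $\gcd(q, 4q) = \gcd(3q, 4q) = q$ and the verification that $g^q$ and $g^{3q}$ are genuinely distinct, so that the two-element set is filled rather than collapsed to a single point. An alternative, more computational route writes $h = 2^t$ with $\gcd(t, 4q) = 1$ (so that $t$ is odd, as in Proposition~\ref{pro:primitive}) and reduces the claim to the congruence $\{tq, 3tq\} \equiv \{q, 3q\} \pmod{4q}$, which follows by splitting on whether $t \equiv 1$ or $t \equiv 3 \pmod 4$; I would favor the group-theoretic argument, since it avoids this case analysis.
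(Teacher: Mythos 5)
Your proof is correct and follows essentially the same route as the paper's: both arguments show that the four elements in question have order $4$ and then invoke the fact that a cyclic group of order $4q$ contains exactly $\phi(4)=2$ elements of order $4$, forcing the two two-element sets to coincide. The only cosmetic difference is that you establish distinctness of $g^q$ and $g^{3q}$ abstractly (via $g^{2q}\neq 1$), whereas the paper identifies $2^{3q}\,(\mathrm{mod}\ p) = p - 2^q\,(\mathrm{mod}\ p)$ using identity~(\ref{eq:relation}).
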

\begin{proof}
The order of $2^q (\mod\; p)$ in the multiplicative group $\mathbb{Z}_p\setminus \{0\}$ is $4$ and the order of $2^{3q} (\mod\; p)$ is also 4. By the elementary result in group theory (\cite{gallian}), the number of elements of order 4 in the cyclic group $\mathbb{Z}_p\setminus \{0\}$ is $\phi(4) = 2$. Let $r= 2^q (\mod\; p)$. Note that $\{ r, p-r \}$ is the set of elements of order 4 since $2^{3q} (\mod\; p) = p-r$ by (\ref{eq:relation}). For every primitive element $h$, the two elements $h^q$ and $h^{3q}$ have order 4. Hence $\{ h^q (\mod\; p), h^{3q} (\mod\; p) \} = \{  2^q (\mod\; p), 2^{3q} (\mod\; p)  \}$.
\end{proof}

\section{Main result}
\label{sec:Main}
In this section, we present our main theorem and proof. For a vector $\mathbf{v} = (v_0, v_1, \cdots, v_{n-1})$ in $\mathbb{C}^n$, we define $circ\{ \mathbf{v} \}$ as the circulant matrix whose first row vector consists of $\mathbf{v}$. Our main concern is the invertibility of $circ\{\mathbf{v} \}$ for $\mathbf{v} \in \mathbb{Q}^n$. We begin with the basic theorem (\cite{kra12,HJ}) on calculation of the determinant of any circulant matrix. 

%\vskip 0.1in

\begin{theorem}[\cite{kra12,HJ}]
\label{thm:ams}
Let $\epsilon$ be a primitive $n$-th root of unity, $\mathbf{v} = (v_0, v_1, \cdots, v_{n-1}) \in \mathbb{C}^n$, and $V = circ \{\mathbf{v}\}$. Then the determinant of the circulant matrix $V$ is obtained by the following:
\[ \det V = \prod_{l = 0}^{n-1} \left(\sum_{j = 0}^{n-1} \epsilon^{jl}v_j \right) = \prod_{l = 0}^{n-1} \lambda_l,\]
where $\lambda_l := v_0+\epsilon^lv_1 + \cdots +\epsilon^{(n-1)l}v_{n-1}$, $0 \le l \le n-1$, are the eigenvalues of $V$.
\end{theorem}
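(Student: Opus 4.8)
The plan is to diagonalize $V$ explicitly by producing a full set of eigenvectors built from powers of $\epsilon$, and then read off the determinant as the product of the corresponding eigenvalues. Concretely, for each $l \in \{0,1,\ldots,n-1\}$ I would set $\mathbf{x}_l := (1, \epsilon^l, \epsilon^{2l}, \ldots, \epsilon^{(n-1)l})^{\top}$ and claim that $V\mathbf{x}_l = \lambda_l \mathbf{x}_l$ with $\lambda_l = \sum_{j=0}^{n-1}\epsilon^{jl}v_j$. Once this eigenrelation is verified and the vectors $\mathbf{x}_l$ are shown to be linearly independent, $V$ is diagonalizable with spectrum $\{\lambda_0,\ldots,\lambda_{n-1}\}$, whence $\det V = \prod_{l=0}^{n-1}\lambda_l$.

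The heart of the argument is the eigenvalue computation, and the one point requiring care is the cyclic indexing. Writing the entries of a circulant matrix as $(V)_{r,c} = v_{(c-r)\bmod n}$ (with rows and columns indexed from $0$ to $n-1$), I would compute the $r$-th coordinate of $V\mathbf{x}_l$ as $\sum_{c=0}^{n-1} v_{(c-r)\bmod n}\,\epsilon^{cl}$ and then substitute $j = (c-r)\bmod n$. As $c$ runs through a complete residue system modulo $n$, so does $j$, and since $\epsilon^n = 1$ the congruence $c \equiv j+r$ gives $\epsilon^{cl} = \epsilon^{rl}\epsilon^{jl}$. Factoring out $\epsilon^{rl}$ then yields $(V\mathbf{x}_l)_r = \epsilon^{rl}\sum_{j=0}^{n-1} v_j \epsilon^{jl} = \lambda_l\,\epsilon^{rl} = \lambda_l (\mathbf{x}_l)_r$, which is exactly the claimed eigenrelation. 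The only subtlety is the bookkeeping in the index shift together with the use of $\epsilon^{n}=1$ to absorb the reduction modulo $n$ appearing in the exponent.

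Finally I would establish independence of the eigenvectors by observing that the matrix $F$ whose $l$-th column is $\mathbf{x}_l$ has $(r,l)$-entry $\epsilon^{rl}$, so $F$ is a Vandermonde matrix in the nodes $\epsilon^0, \epsilon^1, \ldots, \epsilon^{n-1}$. These nodes are pairwise distinct precisely because $\epsilon$ is a \emph{primitive} $n$-th root of unity, so $\det F = \prod_{0\le r < s \le n-1}(\epsilon^{s}-\epsilon^{r}) \neq 0$; hence $F$ is invertible, the $\mathbf{x}_l$ form a basis of $\mathbb{C}^n$, and $F^{-1}VF = \mathrm{diag}(\lambda_0,\ldots,\lambda_{n-1})$. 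Taking determinants gives $\det V = \prod_{l=0}^{n-1}\lambda_l$, which is the assertion. An equivalent route I would mention as a shortcut is to note $V = \sum_{j=0}^{n-1} v_j P^j = p(P)$, where $P$ is the cyclic shift matrix; since $P$ has eigenvalues $\epsilon^0,\ldots,\epsilon^{n-1}$, the matrix $p(P)$ has eigenvalues $p(\epsilon^l) = \lambda_l$, and the product formula follows identically. The result is classical, so no genuine obstacle arises beyond the careful handling of the cyclic indices and the invocation of primitivity to guarantee distinct Vandermonde nodes.
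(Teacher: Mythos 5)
Your proof is correct: the eigenrelation $V\mathbf{x}_l=\lambda_l\mathbf{x}_l$ is verified with the right index bookkeeping, primitivity of $\epsilon$ is invoked exactly where it is needed (to make the Vandermonde nodes $\epsilon^0,\dots,\epsilon^{n-1}$ pairwise distinct), and the diagonalization $F^{-1}VF=\mathrm{diag}(\lambda_0,\dots,\lambda_{n-1})$ yields the determinant formula. Note that the paper itself gives no proof of this statement --- it is quoted as a known result from the cited references --- and what you have written is precisely the standard classical argument (equivalently, your shortcut $V=p(P)$ with $P$ the cyclic shift), so there is nothing to reconcile between your route and the paper's.
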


%\vskip 0.25in

For concreteness, we may take $\epsilon$ as $e^{2\pi i/n}$ in the above theorem.
From the theorem, it is easy to see that the matrix $circ\{(v_0, v_1, \cdots, v_{n-1})\}$ is nonsingular if there exists $j$ such that $|v_j|>\sum_{k\ne j}|v_k|$ (cf. \cite{HJ}). This sufficient condition is very restrictive because the magnitude of one entry dominates, although it applies when a circulant matrix consists of complex numbers. In general, entries in a row are rather ``{\it balanced} '' meaning that an entry usually has a smaller magnitude than the sum of the magnitude for the rest. 

For example, for our matrix $A_{p,m}$ in (\ref{eq:Apm}), the corresponding circulant matrix $G_{p,m}$ will satisfy the above sufficient condition exactly when \((p-1)^{m}-\sum_{k=1}^{p-2}k^{m}>0\), and this is the statement of Theorem~\ref{thm:Apm} (although a different proof is used in \cite{ZachThesis}). In fact, since $m\geq \log(p-2)/\log((p-1)/(p-2))$ implies \((p-1)^{m}-\sum_{k=1}^{p-2}k^{m}>0\) for any fixed odd prime, from this theorem, we already know that $A_{p,m}$ is invertible when $m\geq \log(p-2)/\log((p-1)/(p-2))$. Thus, what is still left unresolved is whether the matrix $A_{p,m}$ is invertible or not, when $m<\log(p-2)/\log((p-1)/(p-2))$. Our theorems in Section~\ref{subS:GMD} provide answers for this balanced case.

Another example we consider in this paper is the family of circulant $0, 1$-matrices, and these  matrices are always ``balanced'' except for the trivial case when the entry $1$ appears only once in a row, which corresponds to the identity matrix. Thus, our theorem in Section~\ref{subS:01circulant} answers to the question of the invertibility of  $0,1$-matrices for the balanced case.

\medskip

Our main theorem covers the most general cases known so far in terms of matrix size.
We only need a restriction for entries to be rational, and throughout the paper, we assume that circulant matrices have entries in $\mathbb{Q}$ unless otherwise mentioned. 

\begin{theorem}
\label{thm:main}
For a positive number $n$, let $V$ be an $n \times n$ circulant matrix with the first row $(v_0, v_1, \cdots, v_{n-1}) \in \mathbb{Q}^n$. If $V$ is singular, then either the entries in the first row must satisfy $\sum_{j = 0}^{n-1} v_j = 0$ or there exists $d \mid n$ and $d \neq 1$ such that 
\begin{equation}
\label{eq:maincond}
C_d(0)v_0 +C_d(1)v_1 + C_d(2)v_2 + \cdots + C_d(n-1)v_{n-1} = 0,
\end{equation}
where $C_d(k)$ is the Ramanujan's sum and $C_d(0) = \phi(d)$ for convention.
\end{theorem}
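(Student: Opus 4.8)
The plan is to diagonalize the problem via Theorem~\ref{thm:ams} and then exploit Galois symmetry of cyclotomic fields. By Theorem~\ref{thm:ams}, with $\epsilon = e^{2\pi i/n}$ we have $\det V = \prod_{l=0}^{n-1}\lambda_l$ where $\lambda_l = \sum_{j=0}^{n-1}\epsilon^{jl}v_j$. Thus $V$ is singular precisely when $\lambda_l = 0$ for some $l \in \{0,1,\dots,n-1\}$. I would fix such an $l$ and split into two cases according to whether $l = 0$.

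If $l = 0$, then $\lambda_0 = \sum_{j=0}^{n-1} v_j = 0$, which is the first alternative in the statement. So assume $l \neq 0$ and set $\zeta := \epsilon^l$. Since $0 < l < n$, the order of $\zeta$ is $d := n/\gcd(n,l)$, which satisfies $d \mid n$ and $d > 1$, and $\zeta$ is a primitive $d$-th root of unity. The vanishing eigenvalue then reads
\[
\sum_{j=0}^{n-1} v_j\,\zeta^{j} = 0 .
\]

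The crux is to upgrade this single relation, valid for one primitive $d$-th root $\zeta$, into the symmetric Ramanujan-sum condition. Let $\epsilon_1,\dots,\epsilon_{\phi(d)}$ be all the primitive $d$-th roots of unity, i.e.\ the zeros of $\Phi_d$. By Proposition~\ref{prop:cy}, for each $i$ there is an automorphism $f_i$ of $\mathbb{Q}(\zeta) = \mathbb{Q}(\epsilon_i)$ fixing $\mathbb{Q}$ and carrying $\zeta$ to $\epsilon_i$. Applying $f_i$ to the displayed relation and using that each $v_j \in \mathbb{Q}$ is fixed (so $f_i(v_j\zeta^{j}) = v_j\epsilon_i^{\,j}$), I obtain $\sum_{j=0}^{n-1} v_j\,\epsilon_i^{\,j} = 0$ for every $i$. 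Summing these $\phi(d)$ equations and interchanging the order of summation yields
\[
\sum_{j=0}^{n-1} v_j \sum_{i=1}^{\phi(d)} \epsilon_i^{\,j} \;=\; \sum_{j=0}^{n-1} v_j\, C_d(j) \;=\; 0 ,
\]
since by definition $C_d(j) = \sum_{i=1}^{\phi(d)} \epsilon_i^{\,j}$, and $C_d(0) = \phi(d)$ matches the stated convention. This is exactly condition~(\ref{eq:maincond}).

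The main obstacle is this symmetrization step: a priori only the single conjugate $\zeta$ is known to annihilate the coefficient vector, and it is the rationality of the $v_j$ together with the Galois action furnished by Proposition~\ref{prop:cy} that makes the vanishing propagate to all conjugates at once. In writing it out I would take care to justify that the ambient field is genuinely $\mathbb{Q}(\zeta)$, so that $\zeta$ and every $\epsilon_i$ are $\mathbb{Q}$-conjugate (this rests on the irreducibility of $\Phi_d$ over $\mathbb{Q}$), and to note that powers $\zeta^{j}$ with $j \ge d$ cause no difficulty, since $f_i(\zeta^{j}) = \epsilon_i^{\,j}$ holds for every $j$ regardless of its size.
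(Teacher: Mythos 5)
Your proposal is correct and follows essentially the same route as the paper's proof: apply the eigenvalue formula of Theorem~\ref{thm:ams}, identify the vanishing eigenvalue's root of unity as a primitive $d$-th root with $d \mid n$, propagate the relation to all conjugates via the automorphisms of Proposition~\ref{prop:cy}, and sum to recognize the Ramanujan sums. The only cosmetic difference is that you compute $d = n/\gcd(n,l)$ directly, whereas the paper invokes the factorization $x^n-1=\prod_{d\mid n}\Phi_d(x)$ to locate $\epsilon^l$ as a root of some $\Phi_d$.
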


\begin{proof}
Suppose that $\det V = 0$. By Theorem~\ref{thm:ams}, there exists $l$ with $0 \le l \le n-1$ such that $\lambda_l = 0$.
In other words, for some $l$, 
\begin{equation}
\label{eq:lambda_l}
\lambda_l = v_0+\epsilon^lv_1 +\epsilon^{2l}v_2 + \cdots \epsilon^{(n-1)l}v_{n-1} = 0.
\end{equation}

%\vskip 0.15in

By Theorem~\ref{thm:lang}, we know that $\epsilon^l$ is a primitive $d$-th root of unity for some $d \mid n$ and $\epsilon^l$ is a root of $\Phi_d(x) = 0$. If $d=1$, then $\epsilon^l=1$ and the identity (\ref{eq:lambda_l}) becomes $\sum_{j = 0}^{n-1} v_j = 0$. We now assume $d\neq 1$ and let $w := \epsilon^l$. Let $w_1 := w, w_2, w_3, \cdots, w_{\phi(d)}$ be all the (distinct) roots of $\Phi_d(x) = 0$. By Proposition~\ref{prop:cy}, for each $i$, there exists $f_i$ on $\mathbb{Q}(w)$ such that $f_i(w) = w_i$ and $f_i\mid_{\mathbb{Q}} = id$. From the identity (\ref{eq:lambda_l}) we have
\[ v_0 + wv_1 + w^2v_2 + \cdots + w^{n - 1}v_{n-1} = 0. \] 
By applying $f_i$ to this identity for every $i = 2, 3, \cdots, \phi(d)$, we get
\begin{eqnarray*}
0=f_i(0)&=&f_i(v_0 + wv_1 + w^2v_2 + \cdots + w^{n - 1}v_{n-1}) \\
&=& v_0 + f_i(w)v_1+(f_i(w))^2v_2 + \cdots + (f_i(w))^{n-1}v_{n-1}\\
&=& v_0 + w_iv_1 + w_i^2v_2 + \cdots + w_i^{n - 1}v_{n-1}
\end{eqnarray*}
Therefore, we obtain the following list of identities; 
%\vskip 0.15in
\begin{eqnarray*}
v_0 + w_1v_1 + w_1^2v_2 + &\cdots& + w_1^{n - 1}v_{n-1} = 0  \\
v_0 + w_2v_1 + w_2^2v_2 + &\cdots& + w_2^{n - 1}v_{n-1} = 0  \\
 &\cdots&\\
v_0 + w_{\phi(d)}v_1 + w_{\phi(d)}^2v_2 + &\cdots& + w_{\phi(d)}^{n - 1}v_{n-1} = 0  \end{eqnarray*}
We add the $\phi(d)$ identities to obtain the following:
\[ \phi(d)v_0 + \left(\sum_{j = 1}^{\phi(d)}w_j\right)v_1 + \left(\sum_{j = 1}^{\phi(d)}w_j^2\right)v_2 + \cdots + \left(\sum_{j = 1}^{\phi(d)}w_j^{n-1}\right)v_{n-1} = 0 \]
Note that  $\sum_{j = 1}^{\phi(d)}w_j^k$, the coefficient of $v_k$, is the  Ramanujan's sum
%\[C_d(k) = \sum_{1\le a \le d \atop  (a, d) = 1} e^{2\pi i \frac{a}{d}k} \ \ . \] %\vskip 0.15in
\[C_d(k) = \sum_{\genfrac{}{}{0pt}{}{1\le a \le d}{(a, d) = 1}} e^{2\pi i \frac{a}{d}k} \ \ . \] %\vskip 0.15in
\end{proof} 

\medskip
\begin{remark}
 We consider $x^n - 1 = 0 $ and a primitive $n$-th root of unity $\epsilon$. In the above theorem and throughout the paper, for convenience, we may let $\epsilon = e^{2\pi i/n}$, but there are exactly $\phi(n)$ many primitive $n$-th root of unity, and every primitive root makes an equivalent argument throughout our reasoning in our main theorem. 
 \end{remark}

For obtaining the complete list of constraints of the condition as a necessary condition to be singular, it is crucial to calculate Ramanujan's sums as the coefficients of a linear combination of the entries of the first row. 

Our main theorem states a necessary condition for a circulant matrix to be singular. Equivalently, as a corollary, we can restate a sufficient condition for a circulant matrix $circ\{(v_0, v_1, \cdots, v_{n-1})\}$ to be nonsingular.

\begin{corollary}
\label{cor:main}
Let $V$ be an $n \times n$ circulant matrix with the first row $(v_0, v_1, \cdots, v_{n-1}) \in \mathbb{Q}^n$ satisfying $\sum_{j = 0}^{n-1} v_j \ne 0$. If, for every $d \mid n$ with $d\ne 1$,
\begin{equation}
\label{eq:maincorcond}
C_d(0)v_0 +C_d(1)v_1 + C_d(2)v_2 + \cdots + C_d(n-1)v_{n-1}\ne 0,
\end{equation}
then $V$ is nonsingular. 
\end{corollary}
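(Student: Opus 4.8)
The plan is to prove the corollary by contraposition, since it is logically exactly the contrapositive of Theorem~\ref{thm:main}. I would begin by assuming the two hypotheses of the corollary: that $\sum_{j=0}^{n-1} v_j \ne 0$, and that the Ramanujan-sum combination in (\ref{eq:maincorcond}) is nonzero for every divisor $d \mid n$ with $d \ne 1$. The goal is then to conclude that $V$ is nonsingular.

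To carry this out, I would argue by contradiction: suppose instead that $V$ is singular. Theorem~\ref{thm:main} immediately forces one of two alternatives to hold — either $\sum_{j=0}^{n-1} v_j = 0$, or there exists a divisor $d \mid n$ with $d \ne 1$ for which the linear combination $C_d(0)v_0 + C_d(1)v_1 + \cdots + C_d(n-1)v_{n-1}$ vanishes, i.e. (\ref{eq:maincond}) holds. The first alternative directly contradicts the hypothesis $\sum_{j=0}^{n-1} v_j \ne 0$, and the second directly contradicts the standing assumption that (\ref{eq:maincorcond}) is nonzero for all such $d$. In either case we reach a contradiction, so $V$ cannot be singular, which is the desired conclusion.

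Since all of the analytic content — the reduction to a vanishing eigenvalue via the determinant product in Theorem~\ref{thm:ams}, the Galois-theoretic averaging over the conjugate roots supplied by Proposition~\ref{prop:cy}, and the identification of the resulting coefficients of $v_k$ as the Ramanujan sums $C_d(k)$ — has already been established in the proof of Theorem~\ref{thm:main}, there is no substantive obstacle here; the corollary is a verbatim logical restatement. The only point requiring minor care is the bookkeeping: matching each of the two disjuncts in the theorem's conclusion to the negation of the corresponding hypothesis, and invoking the convention $C_d(0) = \phi(d)$ so that the $d \ne 1$ condition in (\ref{eq:maincorcond}) aligns exactly with the $d \ne 1$ case of (\ref{eq:maincond}).
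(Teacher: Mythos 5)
Your proposal is correct and is essentially identical to the paper's own proof: both assume the hypotheses, suppose $V$ is singular, invoke Theorem~\ref{thm:main}, and derive a contradiction with the assumed nonvanishing of $\sum_{j=0}^{n-1} v_j$ and of the combinations in (\ref{eq:maincorcond}). No gaps.
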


\begin{proof}
 Suppose to the contrary that, for every $d \mid n$ with $d\ne 1$, the identity in (\ref{eq:maincorcond}) holds true for every $d \mid n$ with $d\ne 1$, and that $V=circ \{ (v_0, v_1, \cdots, v_{n-1}) \}$ with $\sum_{j = 0}^{n-1} v_j \ne 0$ is singular. By Theorem~\ref{thm:main}, we see that there exists $d \mid n$ and $d \neq 1$ such that (\ref{eq:maincond}) holds true, which is a contradiction.
\end{proof}

\section{Explicit conditions for various matrix size $n$} 
\label{sec:Explicit}
Let $$\mathcal{E}_n = \bigcup_{\substack{d \ge 1 \\ d \mid n}}\left\{ (v_0, v_1, \cdots, v_{n-1}) \in \mathbb{Q}^n \colon  \sum_{j = 0}^{n-1}C_d(j)v_j = 0 \right\}.$$ Using this notation, Corollary~\ref{cor:main} is restated that if $\mathbf{v} \notin \mathcal{E}_n$ then $circ\{\mathbf{v}\}$ is nonsingular.  Therefore, determining $\mathcal{E}_n$ completely and explicitly is a key for the invertibility of rational circulant matrices, which is done in this section for various $n$. 
To determine $\mathcal{E}_n$ for a given size $n$ we consider a prime factorization of $n = q_1^{e_1}q_2^{e_2} \cdots q_{t}^{e_t}$, where $q_i$'s are distinct primes. In particular, we will obtain the conditions explicitly for infinitely many $n$.
\medskip

The invertibility of rational circulant matrices is known only for when $n$ is prime (Proposition 23, \cite{kra12}). First of all, using our result in this paper we will prove this statement for prime $n$ as a special case. Let $\mathbf{v} = (v_0, v_1, \cdots, v_{q-1})$, where $q$ is prime. We consider $circ\{ \mathbf{v} \}$.
We obtain $ \phi(q)v_0 + C_q(1)v_1 + C_q(2)v_2 + \cdots + C_{q}(q-1)v_{q-1} = 0$. Therefore,
\[(q-1)v_0 = v_1+v_2+\cdots + v_{q-1}. \] From above, we also have $v_0 + v_1 + \cdots + v_{q-1} = 0$. So 
\[\mathcal{E}_q = \left\{\mathbf{v} \in \mathbb{Q}^n \colon \sum_{j = 0}^{q-1}v_j = 0 \right\} \bigcup \left\{\mathbf{v} \in \mathbb{Q}^n \colon(q-1)v_0 = \sum_{j =1}^{q-1} v_j \right\}. \] 

\begin{corollary}(\cite{kra12})
\label{cor:q}
Suppose that $q$ is prime. The matrix $circ\{(v_0, v_1, \cdots, v_{q-1})\}$ is singular iff either $\sum_{j = 0}^{q-1}v_j = 0$ or  $v_0 = v_1 = \cdots = v_{q-1}$. 
\end{corollary}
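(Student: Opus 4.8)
The plan is to show the two directions of the iff using the machinery already built up for $\mathcal{E}_q$. First I would establish the forward direction (singular $\Rightarrow$ the stated alternative). By Corollary~\ref{cor:main} (equivalently, by the description of $\mathcal{E}_q$ just computed), if $circ\{(v_0,\ldots,v_{q-1})\}$ is singular then $\mathbf{v}\in\mathcal{E}_q$, so either $\sum_{j=0}^{q-1}v_j=0$ or $(q-1)v_0=\sum_{j=1}^{q-1}v_j$ holds. In the first case we are done. In the second case I would combine $(q-1)v_0=\sum_{j=1}^{q-1}v_j$ with the possibility that $\sum_{j=0}^{q-1}v_j\neq 0$; but here the key observation is that since $q$ is prime, $n=q$ has only the two divisors $d=1$ and $d=q$, so $\mathcal{E}_q$ is exactly the union of these two hyperplanes and there are no other conditions to check. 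The condition $(q-1)v_0=\sum_{j=1}^{q-1}v_j$ alone does not force all entries equal, so the real content is to see that the two conditions interact correctly to produce $v_0=v_1=\cdots=v_{q-1}$.

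The cleaner route is to argue both directions simultaneously through the eigenvalue structure of Theorem~\ref{thm:ams} rather than juggling the two hyperplanes. For prime $q$, every $l$ with $1\le l\le q-1$ makes $\epsilon^l$ a primitive $q$-th root of unity, so the eigenvalues split into $\lambda_0=\sum_{j=0}^{q-1}v_j$ and the $q-1$ eigenvalues $\lambda_l$ for $1\le l\le q-1$, each of which is a value $v_0+\epsilon^l v_1+\cdots+\epsilon^{(q-1)l}v_{q-1}$ at a primitive $q$-th root of unity. By Proposition~\ref{prop:cy}, the Galois automorphisms of $\mathbb{Q}(\epsilon)$ permute these $q-1$ eigenvalues transitively, so if any one of them vanishes then its image under every automorphism vanishes, i.e.\ \emph{all} of $\lambda_1,\ldots,\lambda_{q-1}$ vanish at once. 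Thus singularity occurs precisely when either $\lambda_0=0$ (giving $\sum_j v_j=0$) or $\lambda_1=\cdots=\lambda_{q-1}=0$.

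For the backward direction and to finish the second case, I would show that $\lambda_1=\cdots=\lambda_{q-1}=0$ is equivalent to $v_0=v_1=\cdots=v_{q-1}$. The implication $v_0=\cdots=v_{q-1}\Rightarrow\lambda_l=0$ for $l\neq 0$ is immediate since $\sum_{j=0}^{q-1}\epsilon^{jl}=0$ whenever $\epsilon^l\neq 1$. For the converse, $\lambda_1=\cdots=\lambda_{q-1}=0$ together with the obvious identity $\sum_{l=0}^{q-1}\lambda_l = q\,v_0$ (the averaging/inverse-DFT relation) forces $\lambda_0=q\,v_0$; more usefully, the system of $q-1$ vanishing conditions plus one free value determines $\mathbf{v}$ up to a constant vector. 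Concretely, vanishing of $\lambda_l$ for all $l\neq 0$ says the vector $\mathbf{v}$ is orthogonal (under the DFT) to every nonconstant character, hence $\mathbf{v}$ lies in the one-dimensional space of constant vectors, i.e.\ $v_0=\cdots=v_{q-1}$.

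The main obstacle, and the step deserving the most care, is the transitivity claim in the second paragraph: one must verify that for prime $q$ the Galois group of $\Phi_q$ acts transitively on all $q-1$ eigenvalues $\lambda_1,\ldots,\lambda_{q-1}$, so that the vanishing of a single balanced eigenvalue propagates to all of them. This is exactly where primality of $q$ is used (so that $\Phi_q$ has degree $q-1$ and all nonzero exponents give primitive roots), and it is what rules out intermediate singular configurations and pins down the clean characterization $v_0=\cdots=v_{q-1}$. I would make sure the application of Proposition~\ref{prop:cy} is stated so that it yields the full orbit, not merely the existence of individual automorphisms.
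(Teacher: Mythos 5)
Your proof is correct, but it takes a genuinely different route from the paper's. The paper proves the nontrivial direction by contradiction with a rotation trick: assuming the entries are not all equal and have nonzero sum, it rotates the first row so that the maximum entry $v_{i_0}$ comes first, obtains the strict inequality $(q-1)v_{i_0} > \sum_{j\neq i_0} v_j$, concludes that the rotated vector lies outside both hyperplanes of $\mathcal{E}_q$, and then invokes Corollary~\ref{cor:main} plus the fact that a circulant built from a cyclic rotation of the row is singular exactly when the original is. Your argument instead works directly at the level of eigenvalues: for prime $q$ every $\epsilon^l$ with $l\neq 0$ is a primitive $q$-th root of unity, the Galois action forces $\lambda_1,\dots,\lambda_{q-1}$ to vanish all together, and the inverse DFT identifies that simultaneous vanishing with $v_0=\cdots=v_{q-1}$. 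The transitivity step you flag as the main obstacle does hold, and the clean way to get it from Proposition~\ref{prop:cy} is the same maneuver the paper uses inside the proof of Theorem~\ref{thm:main}: take the root $w=\epsilon^l$ at which $\lambda_l$ vanishes as the base point $\epsilon_1$ of the proposition, so the automorphisms carry $w$ to every other primitive $q$-th root while fixing the rational $v_j$, killing every $\lambda_{l'}$ with $l'\neq 0$. Comparing the two: yours is more structural, avoids the rotation/maximum argument entirely, and actually establishes a slightly stronger fact (for rational circulants of any size $n$ and any fixed divisor $d$, the eigenvalues attached to primitive $d$-th roots of unity vanish all-or-none), which is the real mechanism behind Theorem~\ref{thm:main}; the paper's proof is more elementary, reuses the already-computed set $\mathcal{E}_q$, and leans on the ordering of real entries rather than on field automorphisms. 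One cosmetic point: your first paragraph, which starts down the $\mathcal{E}_q$ route and then abandons it, can simply be deleted --- the eigenvalue argument in your remaining paragraphs is self-contained.
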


\begin{proof}

Note that the proof for sufficiency of being singular is trivial. Suppose that the matrix $circ\{ (v_0, v_1, \cdots, v_{q-1}) \}$ is singular. If $\sum_{j = 0}^{q-1}v_j \neq 0$, we need to show that all $v_j$'s are equal. Now assume that not all $v_j$'s are equal. By letting $v_{i_0} = \max_{0 \le i \le q-1 } \{ v_i \}$ and $v_{i_1} = \min_{0 \le i \le q-1} \{ v_i\}$, we have $v_{i_0} \ge v_i$ for all $i$ and in particular $v_{i_0} > v_{i_1}$. Therefore, $(q-1)v_{i_0} > \sum_{j \neq i_0} v_j$. Let $\mathbf{u}:=(v_{i_0}, v_{i_0+1}, \cdots, v_{q+i_0-1})$, where the subscripts are modulo $q$. Then, since $\mathbf{u}\notin \mathcal{E}_q$, by Corollary~\ref{cor:main}, the matrix $circ\{\mathbf{u}\}$ is nonsingular.
Since $\mathbf{u}$ is simply a circular rotation of the row vector $(v_0, v_1, \cdots, v_{q-1})$, the matrix $circ\{ (v_0, v_1, \cdots, v_{q-1}) \}$ nonsingular as well, and this is a contradiction.
  \end{proof}  
  
\medskip

For any $n \ge 1$, since $1 \mid n$, we see that every solution vector $\mathbf{v} = (v_0, v_1, \cdots, v_{n-1})$ of $v_0 + v_1 + \cdots + v_{n-1} = 0$ is always included in $\mathcal{E}_n$. We let $\mathcal{E}_n^1 = \{ (v_0, v_1, \cdots, v_{n-1}) \colon \sum_{j = 0}^{n-1}v_j = 0 \}$. Now we assume that $d$ in Theorem~\ref{thm:main} is larger than 1. To determine $\mathcal{E}_n$ completely we first need to obtain all the factors of $n$ bigger than 1 and for each factor $d$ we obtain the corresponding identity (\ref{eq:maincond}). Below we explicitly write the set $\mathcal{E}_n$, consisting of the union of the solution vectors for each of these identities for various $n$. Although it is clear that for any integer $n$, we can find such a set $\mathcal{E}_n$, we only consider integers $n$ the form $n = 2^{k_1}q^{k_2}r^{k_3}$, where $q$ and $r$ are distinct odd primes and $k_i \ge 0$. 

%\begin{enumerate}[{\bf {Type} I}]
\begin{enumerate}[leftmargin=0cm,itemindent=0cm,labelwidth=\itemindent,labelsep=0cm,align=left,label={\bf Type \Roman*}]
\item {\;\bf ($n = q^k$, $q$ odd prime and $k \ge 2$).}
\label{type:qk}
Let $d = q^t$. For each $t$ with $1\le t \le k$, we obtain
\[ C_{q^t}(0)v_0 +C_{q^t}(1)v_1 + C_{q^t}(2)v_2 + \cdots + C_{q^t}(q^k-1)v_{q^k-1} = 0. \]
By property \ref{Ram:power} about Ramanujan's sum, $C_{q^t}(j)= 0$ if $q^{t-1} \nmid j$. Also, $C_{q^t}(j)$ is positive iff $q^t \mid j$. Since $\phi(q^t)  = q^t - q^{t-1} = q^{t-1}(q-1)$, we obtain the following identity: for $1\le t \le k$,
%\[ (q-1)\sum_{j = 0}^{q^{k-t}-1}v_{q^tj} = \sum_{q \nmid j \atop 1\le j \le q^{k-t+1}-1} v_{q^{t-1}j} \]
%[ (q-1)\sum_{j = 0}^{q^{k-t}-1}v_{q^tj} = \sum_{\genfrac{}{}{0pt}{}{q \nmid j}{1\le j \le q^{k-t+1}-1}} v_{q^{t-1}j} \]
\begin{equation}
\label{eq:qkCond}
(q-1)\sum_{j = 0}^{q^{k-t}-1}v_{q^tj} 
= \sum_{j = 0}^{q^{k-t}-1}\left( v_{q^tj+q^{t-1}}+v_{q^tj+2q^{t-1}}+ \cdots +  v_{q^tj+(q-1)q^{t-1}}\right).
\end{equation}
Thus, $\mathcal{E}_{q^k}= \bigcup_{0 \le t \le k}\Big\{ (v_0, v_1, \cdots, v_{q^k-1}) \colon \hbox{{\rm identity}} \; (\ref{eq:qkCond})  \Big\} . $

\smallskip
\item {\;\bf ($n = 2^k$, $k \ge 2$).}
\label{type:2k}
By using a similar argument as in \ref{type:qk} for $2$ instead of odd prime $q$, we see that
\[ \mathcal{E}_{2^k} = \bigcup_{0 \le t \le k}\left\{\mathbf{v} \colon \sum_{j = 0}^{2^{k-t}-1}v_{2^tj} 
= \sum_{j = 0}^{2^{k-t}-1}v_{2^tj+2^{t-1}} \right\} . \]

\item {\;\bf ($n = 2q$, $q$ odd prime).} 
\label{type:2q}
The possible factors are $d = 2, q$, and $2q$. 
\begin{enumerate}[label=\arabic*)]
\item $d = 2$: We obtain \[ v_0 + C_2(1)v_1 +C_2(2)v_2 + \cdots + C_2(2q-1)v_{2q-1} = 0. \] It is easy to see that $C_2(j) = 1$ if $j$ is even and $C_2(j) = -1$ if $j$ is odd. Hence, the identity is simplified as 
\[ \sum_{j = 0}^{q-1} v_{2j} = \sum_{j = 0}^{q-1} v_{2j+1}. \]
\item $d = q$: We obtain
\[ (q-1)v_0 + C_q(1)v_1 +C_q(2)v_2 + \cdots + C_q(2q-1)v_{2q-1} = 0 \] 
and we know that $C_q(j) = \begin{cases} -1 & \text{ if } q \nmid j \\ q-1 & \text{ if } q \mid j \end{cases}$. Thus the identity is simplified as
\begin{equation}
\label{eq:2qCond2} 
(q-1) (v_0+v_q)=  \sum_{j = 1}^{q-1}(v_j + v_{q+j}). 
\end{equation}
\item $d = 2q$:
The same argument as before gives the following identity:
\[ \phi(2q)v_0 + C_{2q}(1)v_1 +C_{2q}(2)v_2 + \cdots + C_{2q}(2q-1)v_{2q-1} = 0. \] 
It is known that $\phi(2q) = \phi(q) = q-1$ and $C_{2q}(j) = C_2(j)C_q(j)$. We obtain 
\begin{equation}
\label{eq:2qCond3} 
 (q-1)v_q + \sum_{j=1}^{q-1}v_{2j}= (q-1)v_0+\sum_{j = 1}^{\frac{q-1}{2}}v_{2j-1} + \sum_{j = \frac{q+1}{2}+1}^{q-1}v_{2j-1}.
\end{equation}
\end{enumerate} 
Thus, \[\mathcal{E}_{2q}= \mathcal{E}_{2q}^1 \bigcup \left\{ \mathbf{v} \colon \sum_{j = 0}^{q-1} v_{2j} = \sum_{j = 0}^{q-1} v_{2j+1}\right\}\bigcup\Big\{ \mathbf{v} \colon \hbox{{\rm identity}} \; (\ref{eq:2qCond2}) \hbox{{\rm \ or}} \; (\ref{eq:2qCond3})\Big\}.\]

\smallskip
\item {\;\bf ($n = 2q^k$, $q$ odd prime and $k \ge 2$).}
The possible factors are $d = 2$, $d = q^t, \ 1 \le t \le k$, and $d = 2q^t, \ 1 \le t \le k$. 
\begin{enumerate}[label=\arabic*)]
\item $d = 2$: We have
\[ (2-1)v_0 + C_2(1)v_1 +C_2(2)v_2 + \cdots + C_2(2q^k-1)v_{2q^k-1} = 0 \]
which is simplified as $\sum_{j = 0}^{q^k-1} v_{2j} = \sum_{j = 0}^{q^k-1} v_{2j+1}$.

\item $d = q^t$ with $1 \le t \le k$: We have 
\[ C_{q^t}(0)v_0 +C_{q^t}(1)v_1 + C_{q^t}(2)v_2 + \cdots + C_{q^t}(2q^k-1)v_{2q^k-1} = 0 \]
and by \ref{type:qk} this is simplified as
%\[ (q-1)\sum_{j = 0}^{2q^{k-t}-1}v_{q^tj} = \sum_{\genfrac{}{}{0pt}{}{q \nmid j}{1\le j \le 2q^{k-t+1}-1}} v_{q^{t-1}j} \]
\begin{equation}
\label{eq:2qkCond2} 
(q-1)\sum_{j = 0}^{2q^{k-t}-1}v_{q^tj} 
= \sum_{j = 0}^{2q^{k-t}-1}\left( v_{q^tj+q^{t-1}}+v_{q^tj+2q^{t-1}}+ \cdots +  v_{q^tj+(q-1)q^{t-1}}\right).
\end{equation} 

\item $d = 2q^t$ with $1 \le t \le k$: We have
\[ C_{2q^t}(0)v_0 +C_{2q^t}(1)v_1 + C_{2q^t}(2)v_2 + \cdots + C_{2q^t}(2q^k-1)v_{2q^k-1} = 0.  \]
Since $C_{2q^t}(0) = \phi(2q^t) = q^{t-1}(q-1)$ and $C_{2q^t}(j) = C_2(j)C_{q^t}(j)$ we obtain
%\[\sum_{\genfrac{}{}{0pt}{}{q \nmid 2j-1}{1 \le j \le q^{k-t+1}-1}}v_{q^{t-1}(2j-1)} + (q-1)\sum_{\genfrac{}{}{0pt}{}{q \nmid j}{0\le j \le \frac{q^{k-t}-1}{2}}}v_{2q^tj}\]
%\[=\sum_{\genfrac{}{}{0pt}{}{q \nmid j}{0 \le j \le q^{k-t+1}-1}}v_{2q^{t-1}j} + (q-1)\sum_{\genfrac{}{}{0pt}{}{q \nmid 2j-1}{1\le j \le \frac{q^{k-t}-1}{2}+1}}v_{q^t(2j-1)}   \] 

\begin{eqnarray}
\label{eq:2qkCond3}
&{\;}&(q-1)\sum_{j = 0}^{q^{k-t}-1}v_{2q^tj} + \sum_{j = 0}^{q^{k-t}-1}\left( v_{2q^tj+q^{t-1}}+v_{2q^tj+3q^{t-1}}+ \cdots +  v_{2q^tj+(q-2)q^{t-1}}\right)\nonumber\\
&+&\sum_{j = 0}^{q^{k-t}-1}\left( v_{2q^tj+q^t+2q^{t-1}}+v_{2q^tj+q^t+4q^{t-1}}+ \cdots +  v_{2q^tj+q^t+(q-1)q^{t-1}}\right) \nonumber\\
&=& (q-1)\sum_{j = 0}^{q^{k-t}-1}v_{2q^tj+q^t}\nonumber\\
&+&\sum_{j = 0}^{q^{k-t}-1}\left( v_{2q^tj+2q^{t-1}}+v_{2q^tj+4q^{t-1}}+ \cdots +  v_{2q^tj+(q-1)q^{t-1}}\right) \nonumber\\
&+&\sum_{j = 0}^{q^{k-t}-1}\left( v_{2q^tj+q^t+q^{t-1}}+v_{2q^tj+q^t+3q^{t-1}}+ \cdots +  v_{2q^tj+q^t+(q-2)q^{t-1}}\right)
\end{eqnarray}
\end{enumerate}
Thus, \[\mathcal{E}_{2q^k}= \mathcal{E}_{2q^k}^1\bigcup \left\{ \mathbf{v} \colon \sum_{j = 0}^{q^k-1} v_{2j} = \sum_{j = 0}^{q^k-1} v_{2j+1}\right\}\bigcup \left[\bigcup_{1 \le t \le k} \{  \mathbf{v} \colon \hbox{{\rm identity}} \; (\ref{eq:2qkCond2}) \hbox{{\rm \ or}} \; (\ref{eq:2qkCond3})\} \right]. \]

\smallskip
\item 
\label{type:2kq}
{\;\bf ($n = 2^kq$, $q$ odd prime and $k \ge 2$).}
\begin{enumerate}[label=\arabic*)]
\item For $d =q$, from \ref{type:qk} we obtain the following condition.
\[  (q-1)\sum_{j = 0}^{2^k-1}v_{qj} = \sum_{j = 0}^{2^k-1} (v_{qj+1}+v_{qj+2}+\cdots+v_{qj+q-1})\]

\item For $d = 2^t$ with $t \ge 1$, from \ref{type:2k} we obtain the following condition.

\begin{equation}
\label{eq:2kqCond2}
  \sum_{j = 0}^{2^{k-t}q-1}v_{2^tj} = \sum_{j = 0}^{2^{k-t}q-1}v_{2^tj+2^{t-1}} .  
\end{equation}

\item For $d = 2^tq$ with $t \ge 1$, the coefficient for $v_j$ is $C_{2^tq}(j) = C_{2^t}(j)C_{q}(j)$. Note first that the value $C_{2^t}(j)$ is zero if $2^{t-1} \nmid j$. For $j$ divisible by $2^{t-1}$, we have that (i) $C_{2^t}(j) = -2^{t-1}$ if $2^{t-1} \mid j$ and $2^t \nmid j$, and (ii) $C_{2^t}(j) = 2^{t-1}$ if $2^t \mid j$. The value $C_q(j)$ is $q-1$ if $q \mid j$ and $-1$ if $q \nmid j$. Hence we obtain
\begin{eqnarray}
\label{eq:2kqCond3}
&{\;}&(q-1)\sum_{j = 0}^{2^{k-t}-1}v_{2^tqj} \\
&+&  \sum_{j = 0}^{2^{k-t}-1}(v_{2^tqj+2^{t-1}q+2^t}+v_{2^tqj+2^{t-1}q+2\cdot2^t}+ \cdots + v_{2^tqj+2^{t-1}q+(q-1)\cdot2^t} ) \nonumber \\ 
&+& \sum_{j = 0}^{2^{k-t}-1}(v_{2^tqj+2^{t-1}}+v_{2^tqj+3\cdot2^{t-1}}+v_{2^tqj+5\cdot2^{t-1}}+\cdots + v_{2^tqj+(q-2)2^{t-1}})\nonumber\\
&+& \sum_{j = 0}^{2^{k-t}-1}( v_{2^tqj+(q+2)2^{t-1}}+ v_{2^tqj+(q+4)2^{t-1}}+\cdots + v_{2^tqj+(2q-1)2^{t-1}} )\nonumber\\
&=& (q-1)\sum_{j = 0}^{2^{k-t}-1}v_{2^tqj+2^{t-1}q}+ \sum_{j = 0}^{2^{k-t}-1}(v_{2^tqj+2^t}+v_{2^tqj+2\cdot2^t}+ \cdots + v_{2^tqj+(q-1)\cdot2^t} )\nonumber
\end{eqnarray}
\end{enumerate} 

\smallskip

Thus, 
\begin{eqnarray*}
\mathcal{E}_{2^kq}& = & \mathcal{E}_{2^kq}^1\bigcup\left[\bigcup_{1 \le t \le k} \{  \mathbf{v} \colon \hbox{{\rm identity}} \; (\ref{eq:2kqCond2}) \hbox{{\rm \ or}} \; (\ref{eq:2kqCond3})\} \right] \\
& & \bigcup \left\{ \mathbf{v} \colon  (q-1)\sum_{j = 0}^{2^k-1}v_{qj} = \sum_{j = 0}^{2^k-1} (v_{qj+1}+v_{qj+2}+\cdots+v_{qj+q-1}) \right\}. 
\end{eqnarray*}
\smallskip

\item {\;\bf ($n = 2qr$, $q$ and $r$ odd primes with $q < r$).} 
With the possible factors $d = 2, q, r, 2q, 2r, qr$ and $2qr$, we use similar arguments as in \ref{type:2q} and obtain the following list of identities.

\begin{enumerate}[label=\arabic*)]
\item $d = 2$: 
\[ \sum_{j = 0}^{qr-1} v_{2j} = \sum_{j = 0}^{qr-1} v_{2j+1} \]
\item 
\begin{enumerate}
\item $d = q$: 
\begin{equation}
\label{eq:2qrCond2_1}
(q-1)\sum_{j = 0}^{2r-1} v_{jq} = \sum_{i = 0}^{2r-1} \sum_{j = 1}^{q-1} v_{iq+j}
\end{equation}
\item $d = r$:
\begin{equation}
\label{eq:2qrCond2_2}
(r-1)\sum_{j = 0}^{2q-1} v_{jr} = \sum_{i = 0}^{2q-1} \sum_{j = 1}^{r-1} v_{ir+j}
\end{equation}
\end{enumerate} 
\item 
\begin{enumerate}
\item $d = 2q$: 
\begin{equation}
\label{eq:2qrCond3_1}
\sum_{\substack{a=1\\ q \nmid a}}^{qr-1} v_{2a} + (q-1)\sum_{b=0}^{r-1} v_{(2b+1)q} = \sum_{\substack{a=0\\ q \nmid(2a+1)}}^{qr-1}v_{2a+1} + (q-1) \sum_{b=0}^{r-1} v_{2bq}  
\end{equation}
\item $d = 2r$:  
\begin{equation}
\label{eq:2qrCond3_2}
\sum_{\substack{a=1\\ r \nmid a}}^{qr-1} v_{2a} + (r-1)\sum_{b=0}^{q-1} v_{(2b+1)r} = \sum_{\substack{a=0\\ r \nmid(2a+1)}}^{qr-1}v_{2a+1} + (r-1) \sum _{b=0}^{q-1}v_{2br}
\end{equation}
\end{enumerate}

\item $d = qr$: From property \ref{Ram:prod} in the Ramanujan's sum,  $C_{qr}(j) = C_q(j)C_r(j)$ if $(q, r) = 1$.  
%\[ (q-1)(r-1)(v_0+v_{qr}) +  \sum_{\substack{i=1\\ q, r \nmid i}}^{2qr-1} v_i = (q-1)(v_q + v_{2q} + \cdots v_{q(r-1)} + v_{q(r+1)}+ \cdots + v_{q(2r-1)})\]
%\[ +(r-1)(v_r + v_{2r} + \cdots v_{r(q-1)} + v_{r(q+1)}+ \cdots + v_{r(2q-1)})\]
\begin{eqnarray}
\label{eq:2qrCond4}
	&{\;}&(q-1)(r-1)(v_0+v_{qr}) + \sum_{\substack{i=1\\ q, r \nmid i}}^{2qr-1} v_i \nonumber\\
	&{\;}&=\;(q-1)(v_q + v_{2q} + \cdots v_{q(r-1)} + v_{q(r+1)}+ \cdots + v_{q(2r-1)}) \nonumber\\
	&{\;}&+\;\;(r-1)(v_r + v_{2r} + \cdots v_{r(q-1)} + v_{r(q+1)}+ \cdots + v_{r(2q-1)})
\end{eqnarray}
%\vskip 0.1in
%\item $d  = 2qr$: \[ (q-1) \sum_{j = 0 \atop j \neq \frac{r-1}{2}}^{r-1} v_{(2j+1)q} + (r-1)\sum_{j = 0 \atop j \neq \frac{q-1}{2}}^{q-1} v_{(2j+1)r} + (q-1)(r-1)v_{0} + \sum_{ i = 1 \atop q, r \nmid i}^{qr-1} v_{2i}  \]
\item $d  = 2qr$: 
%\[ (q-1) \sum_{\genfrac{}{}{0pt}{}{j = 0}{j \neq \frac{r-1}{2}}}^{r-1} v_{(2j+1)q} + (r-1)\sum_{\genfrac{}{}{0pt}{}{j = 0}{j \neq \frac{q-1}{2}}}^{q-1} v_{(2j+1)r} + (q-1)(r-1)v_{0} + \sum_{\genfrac{}{}{0pt}{}{i = 1}{q, r \nmid i}}^{qr-1} v_{2i}  \]
%\[ =  (q-1)\sum_{j = 1}^{r-1} v_{2jq} + (r-1)\sum_{j = 1}^{q-1}v_{2jr} + (q-1)(r-1)v_{qr} + \sum_{i = 0 \atop q, r \nmid (2i+1)}^{qr-1} v_{2i+1}  \]
%\[ =  (q-1)\sum_{j = 1}^{r-1} v_{2jq} + (r-1)\sum_{j = 1}^{q-1}v_{2jr} + (q-1)(r-1)v_{qr} + \sum_{\genfrac{}{}{0pt}{}{i = 0}{q, r \nmid (2i+1)}}^{qr-1} v_{2i+1}  \]
\begin{eqnarray}
\label{eq:2qrCond5}
	&{\;}&(q-1) \sum_{\genfrac{}{}{0pt}{}{j = 0}{j \neq \frac{r-1}{2}}}^{r-1} v_{(2j+1)q} + (r-1)\sum_{\genfrac{}{}{0pt}{}{j = 0}{j \neq \frac{q-1}{2}}}^{q-1} v_{(2j+1)r} + (q-1)(r-1)v_{0} + \sum_{\genfrac{}{}{0pt}{}{i = 1}{q, r \nmid i}}^{qr-1} v_{2i} \nonumber\\
	&{\;}&=\;(q-1)\sum_{j = 1}^{r-1} v_{2jq} + (r-1)\sum_{j = 1}^{q-1}v_{2jr}\nonumber\\ 
	&{\;}&+ (q-1)(r-1)v_{qr} + \sum_{\genfrac{}{}{0pt}{}{i = 0}{q, r \nmid (2i+1)}}^{qr-1} v_{2i+1}
\end{eqnarray}
\end{enumerate}
Thus, 
\begin{eqnarray*}
\mathcal{E}_{2qr}& = & \mathcal{E}_{2qr}^1\bigcup \left\{ \mathbf{v} \colon \sum_{j = 0}^{qr-1} v_{2j} = \sum_{j = 0}^{qr-1} v_{2j+1}\right\}\\
& & \bigcup \Big\{  \mathbf{v} \colon \hbox{{\rm identity}} \; (\ref{eq:2qrCond2_1}),  (\ref{eq:2qrCond2_2}), (\ref{eq:2qrCond3_1}), (\ref{eq:2qrCond3_2}), (\ref{eq:2qrCond4}) \hbox{{\rm \ or}} \; (\ref{eq:2qrCond5}) \Big\}.
\end{eqnarray*}
\end{enumerate}

\section{Applications: Determining the invertibility of circulant matrices} %\vskip 0.25in
\label{sec:Appl}
\subsection{Application to the generalizations of the Maillet determinant}
\label{subS:GMD}

In this subsection, we use our main results in this paper in determining the invertibility of the matrices $A_{p,m}$ defined in (\ref{eq:Apm}).

When $m=1$, the leading principal minor of $A_{p,1}$ of order $(p-1)/2$ is called the {\it Maillet determinant} and studied extensively in the literature \cite{TM,HWT,CO,CO2}. In particular, it is shown that the Maillet determinant does not vanish for any $p\ge 3$.

It is easy to see that when $m=1$, the determinant of $A_{p,1}$ vanishes for every $p\ge 5$, and when $p=3$, the determinant of $A_{3,m}$ does not vanish for every $m\ge 1$ (c.f. \cite{GMD,ZachThesis}). Therefore in this paper, we investigate the invertibility of $A_{p,m}$ only for the cases where $m\ge 2$ and $p\ge 5$. For the first few choices of \(p\), the matrices \(A_{p,m}\) are:
$$
%A_{m,3}=\left[\begin{array}{cc} 
%1&2^m\\ 
%2^m&1
%\end{array}\right],\quad
A_{5,m}=\left[\begin{array}{cccc} 
1&2^m&3^m&4^m\\ 
3^m&1&4^m&2^m\\
2^m&4^m&1&3^m\\
4^m&3^m&2^m&1
\end{array}\right],\quad
A_{7,m}=\left[\begin{array}{cccccc} 
1&2^m&3^m&4^m&5^m&6^m\\ 
4^m&1&5^m&2^m&6^m&3^m\\
5^m&3^m&1&6^m&4^m&2^m\\
2^m&4^m&6^m&1&3^m&5^m\\
3^m&6^m&2^m&5^m&1&4^m\\
6^m&5^m&4^m&3^m&2^m&1
\end{array}\right]
$$

Many properties of $A_{p,m}$ are studied in \cite{GMD,ZachThesis}. However, the question of for which values of $m$ and $p$ the matrix $A_{p,m}$ is nonsingular is not answered completely for the most part, although some partial results are obtained there. We will apply our results obtained in this paper to see how we can provide additional information to answer this question. Below we list some results that are the most relevant to our discussion and refer \cite{GMD,ZachThesis} for more details.

\begin{theorem}[\cite{GMD}]
\label{thm:Apm}
The matrix $A_{p,m}$ is nonsingular if \((p-1)^{m}-\sum_{k=1}^{p-2}k^{m}>0\). In particular, for any odd $p$, $A_{p,m}$ is nonsingular if $m\geq \log(p-2)/\log((p-1)/(p-2))$.
\end{theorem}

In the proof of the above theorem in \cite{GMD}, a simple observation that the matrix $JA_{p,m}$ is diagonally dominant where $J$ is the $(p-1)$ by $(p-1)$ reversal matrix is used, without invoking the next result which says $A_{p,m}$ is similar to a circulant matrix.

\begin{theorem}[\cite{GMD,ZachThesis}]
\label{thm:Gpm}
Let $h$ be a primitive of $\mathbb{Z}_{p}$. Then $A_{p,m}$ is permutation similar to a circulant matrix 
$$G_{p,m}:=G_{p,m,h}:=circ\{(1,(h(\mod\; p))^m, (h^2(\mod\; p))^m, \dots,(h^{p-2}(\mod\; p))^m)\}$$
\end{theorem}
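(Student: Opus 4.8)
The plan is to make the reindexing permutation explicit via the discrete logarithm with respect to $h$, and to show that conjugating $A_{p,m}$ by the associated permutation matrix produces exactly $G_{p,m}$. Since $\mathbb{Z}_p\setminus\{0\}$ is cyclic of order $p-1$ generated by $h$, the map $\pi\colon\{0,1,\dots,p-2\}\to\{1,2,\dots,p-1\}$ given by $\pi(a)=h^a\bmod p$ is a bijection. First I would let $P$ be the $(p-1)\times(p-1)$ permutation matrix whose $a$-th column is the standard basis vector $e_{\pi(a)}$, and form $B:=P^{\top}A_{p,m}P$. A short index computation then shows that the $(a,b)$ entry of $B$ (now indexed by $a,b\in\{0,\dots,p-2\}$) equals the $(\pi(a),\pi(b))$ entry of $A_{p,m}$. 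Because $P^{\top}=P^{-1}$, this exhibits a genuine similarity, so it suffices to identify $B$ with $G_{p,m}$.

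The heart of the argument is a one-line evaluation of the reindexed entry. Writing $i=\pi(a)=h^a\bmod p$ and $j=\pi(b)=h^b\bmod p$, the definition (\ref{eq:Apm}) gives the $(\pi(a),\pi(b))$ entry as $\big((i^{-1}j)\bmod p\big)^m$; but in $\mathbb{Z}_p\setminus\{0\}$ we have $i^{-1}j\equiv h^{-a}h^{b}=h^{b-a}\pmod p$, so this entry equals $\big(h^{(b-a)\bmod(p-1)}\bmod p\big)^m$. Thus $B_{a,b}=c_{(b-a)\bmod(p-1)}$ with $c_k:=\big(h^{k}\bmod p\big)^m$, i.e. $B_{a,b}$ depends only on $b-a$ modulo $p-1$. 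This is precisely the defining property of a circulant matrix of size $(p-1)\times(p-1)$.

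Finally I would read off the first row: taking $a=0$ gives $B_{0,b}=c_b=\big(h^{b}\bmod p\big)^m$ for $b=0,\dots,p-2$, and since $c_0=1$ this row is exactly $(1,(h\bmod p)^m,\dots,(h^{p-2}\bmod p)^m)$. Hence $B=circ\{(1,(h\bmod p)^m,\dots,(h^{p-2}\bmod p)^m)\}=G_{p,m}$, and $A_{p,m}=PBP^{\top}$ is permutation similar to $G_{p,m}$, as claimed.

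I do not expect a deep obstacle here; the content is essentially the translation of multiplication in the cyclic group $\mathbb{Z}_p\setminus\{0\}$ into addition of exponents. The only real care needed is bookkeeping: matching the $1$-based indexing $i,j\in\{1,\dots,p-1\}$ of $A_{p,m}$ with the $0$-based indexing $v_0,\dots,v_{p-2}$ used in the definition of $circ\{\cdot\}$, and checking that the sign of the shift $(b-a)$ agrees with the circulant shift convention of Theorem~\ref{thm:ams}, so that one recovers the first row itself rather than, say, its reversal. Verifying that it is $\pi(a)^{-1}\pi(b)$ (and not $\pi(b)^{-1}\pi(a)$) that appears is the one spot where an off-by-sign slip could otherwise creep in.
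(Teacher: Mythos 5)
Your proof is correct: the discrete-logarithm reindexing $\pi(a)=h^a\bmod p$, the conjugation $B=P^{\top}A_{p,m}P$ giving $B_{a,b}=A_{\pi(a),\pi(b)}=\big(h^{(b-a)\bmod(p-1)}\bmod p\big)^m$, and the check that this matches the circulant convention of Theorem~\ref{thm:ams} together constitute a complete argument. Note that the paper itself offers no proof of Theorem~\ref{thm:Gpm} (it is quoted from \cite{GMD,ZachThesis}), so there is nothing in-paper to compare against; your argument is the standard one such references use, and your attention to the $0$-based versus $1$-based indexing and to the sign of the shift $b-a$ addresses the only points where such a proof could go wrong.
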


We recall that a primitive element $h$ of $\mathbb{Z}_{p}$ satisfies $\{h^{k}(\mod\; p):1\leq k\leq p-1\}=\{1,2,\ldots,p-1\}$. When $p=5$, there are two primitive elements $h=2$ and $h=3$ of $\mathbb{Z}_{5}$, and when $p=7$, there are two primitive elements $h=3$ and $h=5$ of $\mathbb{Z}_{7}$. Thus, Theorem~\ref{thm:Gpm} says that $A_{5,m}$ and $A_{7,m}$ are permutation similar to 
$$
G_{5,m,2}=\left[\begin{array}{cccc} 
1&2^m&4^m&3^m\\ 
3^m&1&2^m&4^m\\
4^m&3^m&1&2^m\\
2^m&4^m&3^m&1
\end{array}\right],\quad
G_{7,m,3}=\left[\begin{array}{cccccc} 
1&3^m&2^m&6^m&4^m&5^m\\ 
5^m&1&3^m&2^m&6^m&4^m\\
4^m&5^m&1&3^m&2^m&6^m\\
6^m&4^m&5^m&1&3^m&2^m\\
2^m&6^m&4^m&5^m&1&3^m\\
3^m&2^m&6^m&4^m&5^m&1
\end{array}\right]$$
respectively. 

\medskip

We now present how the results in the previous sections can be used to provide information about the invertibility of $G_{p,m}$, hence that of $A_{p,m}$.

\begin{theorem} 
\label{thm:2q}
Let $p$ be a prime of the form $p=2q+1$, with $q$ an odd prime. Then for all $m\ge 2$, $A_{p,m}$ is nonsingular.
\end{theorem}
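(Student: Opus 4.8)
The plan is to invoke the permutation similarity of Theorem~\ref{thm:Gpm} and reduce everything to the explicit description of $\mathcal{E}_{2q}$ obtained for \textbf{Type III}. Write $a_k := h^k\,(\mod p)$ and $v_k := a_k^m$, so that $A_{p,m}$ is permutation similar to $G_{p,m}=circ\{(v_0,\dots,v_{2q-1})\}$. Since $q$ is an odd prime, $n=p-1=2q$ has exactly the proper divisors $d\in\{2,q,2q\}$ greater than $1$, so by Corollary~\ref{cor:main} it suffices to show that $(v_0,\dots,v_{2q-1})\notin\mathcal{E}_{2q}$, i.e. that the defining identity fails for each of $d=1,2,q,2q$. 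The $d=1$ case is immediate: every $v_k=a_k^m$ is a positive integer, so $\sum_k v_k>0$ and $(v_0,\dots,v_{2q-1})\notin\mathcal{E}_{2q}^1$. Throughout I will use the relation (\ref{eq:relation}), which in this notation reads $a_{k}+a_{k+q}=p$, together with the fact that the even-indexed $a_k$ are exactly the quadratic residues and the odd-indexed ones the non-residues; note also that $p=2q+1\equiv 3\ (\mod 4)$ because $q$ is odd.

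The potentially hardest case, $d=2$, turns out to be the cleanest. The $d=2$ identity is $\sum_{j=0}^{q-1}v_{2j}=\sum_{j=0}^{q-1}v_{2j+1}$, i.e. the eigenvalue $\lambda:=\sum_{k=0}^{2q-1}(-1)^k v_k$ vanishes. Rather than comparing the sum of $m$-th powers of residues with that of non-residues (a genuinely delicate number-theoretic quantity), I would argue by parity: modulo $2$, subtraction and addition agree, so $\lambda\equiv\sum_{k=0}^{2q-1}v_k=\sum_{a=1}^{p-1}a^m\ (\mod 2)$. Exactly $q$ of the bases $a\in\{1,\dots,p-1\}$ are odd, hence $\sum_{a=1}^{p-1}a^m\equiv q\equiv 1\ (\mod 2)$. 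Thus $\lambda$ is an odd integer, in particular nonzero, so the $d=2$ identity fails. This is precisely where the hypothesis that $q$ is odd is used.

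For $d=q$, I would simplify (\ref{eq:2qCond2}) using (\ref{eq:relation}). Pairing $v_j$ with $v_{q+j}=(p-a_j)^m$ and writing $g(k):=k^m+(p-k)^m$, the identity collapses to $(q-1)g(1)=\sum_{k=2}^{q}g(k)$ (each unordered pair $\{k,p-k\}$, $2\le k\le q$, is represented exactly once, and $g(k)=g(p-k)$). For $m\ge 2$ the function $g$ is strictly decreasing on $[1,q]\subset[1,p/2)$, since $g'(t)=m\big(t^{m-1}-(p-t)^{m-1}\big)<0$ there; as $q\ge 3$ the right-hand side is a genuine average of the $q-1$ values $g(2),\dots,g(q)$, each strictly below $g(1)$, so $(q-1)g(1)>\sum_{k=2}^{q}g(k)$ and the identity fails. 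It is worth noting that $m\ge 2$ is essential exactly here: for $m=1$ one has $g\equiv p$ and the identity holds, which is the reason $A_{p,1}$ is singular.

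Finally, for $d=2q$ I would use $C_{2q}(k)=C_2(k)C_q(k)$ (property \ref{Ram:prod}) to simplify (\ref{eq:2qCond3}) to $\lambda=q(v_0-v_q)=q\big(1-(p-1)^m\big)$, a negative number since $v_q=(p-1)^m$ is the largest entry. Bounding the odd-indexed entries by $\sum_{k\ \mathrm{odd}}v_k\le (p-1)^m+(q-1)(p-2)^m$ and the even-indexed ones by $\sum_{k\ \mathrm{even}}v_k\ge q$, a short computation gives $\lambda-q\big(1-(p-1)^m\big)\ge (q-1)\big((p-1)^m-(p-2)^m\big)>0$, so this identity also fails. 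With all four identities ruled out we conclude $(v_0,\dots,v_{2q-1})\notin\mathcal{E}_{2q}$, whence $G_{p,m}$, and therefore $A_{p,m}$, is nonsingular. The main obstacle is conceptual rather than computational: the $d=2$ condition superficially demands control of residue versus non-residue power sums, and the key insight is that a parity argument (using $q$ odd) bypasses this entirely; the remaining work is the routine but careful reduction of the Type III identities via (\ref{eq:relation}) and the monotonicity and size estimates above.
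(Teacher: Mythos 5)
Your proposal is correct and follows essentially the same route as the paper: reduce to $G_{p,m}$ via Theorem~\ref{thm:Gpm}, then rule out each nontrivial identity of $\mathcal{E}_{2q}$ from \ref{type:2q}, using a parity argument (relying on $q$ being odd) for $d=2$ and the pairing (\ref{eq:relation}) together with the strict monotonicity of $x^m+(p-x)^m$ for $d=q$. The only difference is in execution for $d=2q$: the paper chains inequalities off the $d=q$ case, whereas you first simplify that identity to $\lambda=q(v_0-v_q)$ and bound $\lambda$ directly; both arguments hinge on the entry $v_q=(p-1)^m$ being dominant, so this is a cosmetic rather than structural departure.
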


%\begin{proof}[Proof of Theorem~\ref{thm:2q}.]
\begin{proof}
Suppose that $A_{p,m}$ satisfies the given assumptions. By Theorem~\ref{thm:Gpm}, $A_{p,m}$ is similar to $G_{p,m} = circ\{(v_0,v_1, \cdots, v_{2q-1})\}$, where $v_j := (h^j(\mod\; p))^m$, $j=0,1,\dots, 2q-1$, for some primitive $h$ of $\mathbb{Z}_{p}$.
Then $v_0=1$, $v_q=(2q)^m$ and $v_0, v_1, \cdots, v_{2q-1}$ are all positive, hence $\sum_{j = 0}^{2q-1}v_j > 0$. Thus, to conclude that $G_{p,m}$ (hence $A_{p,m}$) is nonsingular, by Corollary~\ref{cor:main}, it suffices to show that $v_0, v_1, \cdots, v_{2q-1}$ do not satisfy any of the nontrivial identities in $\mathcal{E}_{2q}$ of \ref{type:2q} in Section~\ref{sec:Explicit}: $\sum_{j = 0}^{q-1} v_{2j} = \sum_{j = 0}^{q-1} v_{2j+1}$, (\ref{eq:2qCond2}), and (\ref{eq:2qCond3}).

To show the first identity, $\sum_{j = 0}^{q-1} v_{2j} = \sum_{j = 0}^{q-1} v_{2j+1}$, does not hold true, we observe that $\{h^j(\mod\; p): j=0,1,\dots, 2q-1\}=\{1,2,\dots, 2q\}$, and that there are exactly $q$ many odd integers in $\{v_0, v_1, \cdots, v_{2q-1}\}$. Let $\mathfrak{o}_0$ and $\mathfrak{o}_1$ be the number of odd integers in $\{v_{2j}:j=0,1,\cdots,q-1\}$ and $\{v_{2j+1}:j=0,1,\cdots,q-1\}$, respectively. Since $\mathfrak{o}_0+\mathfrak{o}_1$ is an odd number (i.e. $q$), $\mathfrak{o}_0$ and $\mathfrak{o}_1$ cannot be the same. Therefore, the two summations $\sum_{j = 0}^{q-1} v_{2j}$ and $\sum_{j = 0}^{q-1} v_{2j+1}$ in the first identity cannot be the same.

%For the next identity, note that from (\ref{eq:relation}), we have $v_q = (h^{(p-1)/2}(\mod\; p))^m = (p-1)^m$ and $h^{(p-1)/2+j}(\mod\; p)= p - (h^{j}(\mod\; p))$ for $j = 1, 2, \cdots, q-1$. Together with the fact that 
%$\{h^j(\mod\; p): j=1,\dots, q-1\}\cup \{h^{(p-1)/2+j}(\mod\; p): j=1,\dots, q-1\}=\{1,2,\dots, 2q\}$

For the next identity, observe that (\ref{eq:relation}) gives $h^{q+j}(\mod\; p)= 2q+1 - (h^{j}(\mod\; p))$ for $j = 1, 2, \cdots, q-1$. Combining this with the fact that 
$\{h^j(\mod\; p), h^{q+j}(\mod\; p): j=1,\dots, q-1\}=\{1,2,\dots, 2q\}\bks \{1,2q\}$, we see that the right-hand side of (\ref{eq:2qCond2}) becomes 
$\sum_{j = 1}^{q-1}(v_j + v_{q+j})=\sum_{j = 1}^{q-1}(j+1)^m+(2q-j)^m$. Noting that $v_0+v_q=1+(2q)^m$, since for $m \ge 2$ the function $x^m + (M-x)^m$ on $1 \le x \le \frac{M}{2}$ is maximized when $x = 1$ and strictly decreasing on the interval, by considering $M=2q+1$, we get $v_0+v_q > v_j + v_{q+j}$ for every $ 1 \le j \le q-1$. Therefore, 
\[(q-1) (v_0+v_q) > \sum_{j = 1}^{q-1}(v_j + v_{q+j}),\]
and the equality in (\ref{eq:2qCond2}) does not hold true.

Finally,  since $v_{2j} > 1$ for any $j = 1, 2, \cdots, q-1$, from the above inequality we have
%\[  (q-1)v_q + \sum_{j=1}^{q-1}v_{2j} > (q-1)(v_q + v_0) >  \sum_{j = 1}^{q-1}(v_j + v_{q+j}) >  (q-1)v_0+\sum_{j = 1}^{\frac{q-1}{2}}v_{2j-1} + \sum_{j = \frac{q+1}{2}+1}^{q-1}v_{2j-1}.\]
\begin{eqnarray*}
	(q-1)v_q + \sum_{j=1}^{q-1}v_{2j} &>& (q-1)(v_q + v_0) >  \sum_{j = 1}^{q-1}(v_j + v_{q+j}) \\
	&> & (q-1)v_0+\sum_{j = 1}^{\frac{q-1}{2}}v_{2j-1} + \sum_{j = \frac{q+1}{2}+1}^{q-1}v_{2j-1}
\end{eqnarray*}

Thus the identity (\ref{eq:2qCond3}) does not hold true.
\qquad\end{proof}

From Theorem~\ref{thm:2q}, we see that, for example, when $p=7, 11, 23, 47, 59, 83$, $A_{p,m}$ is nonsingular for all $m\ge 2$.

\begin{theorem} 
\label{thm:4q}
Let $p$ be a prime of the form $p=4q+1$, with $q$ an odd prime, and let $r= 2^q (\mod\; p)$. If either $r  = 0(\mod\; 4)$ or $r= 1 (\mod\; 4)$, then for all odd $m\ge 3$, $A_{p,m}$ is nonsingular. 
\end{theorem}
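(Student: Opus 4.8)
The plan is to combine Theorem~\ref{thm:Gpm} with Corollary~\ref{cor:main}. Fixing the primitive root $h=2$ (legitimate by Theorem~\ref{thm:primitive}), it suffices to show that the first row $\mathbf{v}=(v_0,\dots,v_{4q-1})$ of $G_{p,m}$, where $v_\ell=(2^\ell(\mod\; p))^m>0$, does not lie in $\mathcal{E}_{4q}$. Since $4q=2^2q$ falls under \ref{type:2kq} (with $k=2$), the set $\mathcal{E}_{4q}$ is the union of the sum-zero hyperplane and the five identities attached to the divisors $d\in\{2,4,q,2q,4q\}$. The sum-zero identity is ruled out at once because every $v_\ell>0$, so the whole proof reduces to verifying that $\mathbf{v}$ satisfies none of the five nontrivial identities.

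Before the case analysis I would record the tools. Writing $P(a):=a^m+(p-a)^m$, identity (\ref{eq:relation}) gives $v_\ell+v_{\ell+2q}=P(2^\ell(\mod\; p))$, and $P$ is strictly decreasing on $[1,p/2]$ with maximum $P(1)=1+(p-1)^m$. Even (resp.\ odd) indices correspond to quadratic residues (resp.\ non-residues); since $q$ is odd we have $p\equiv5\pmod8$, so $-1$ is a residue, and $a\mapsto p-a$ then shows there are exactly $q$ odd residues and $q$ odd non-residues. For odd $m\ge3$ the congruences $a^m\equiv a\pmod4$ (for odd $a$) and $a^m\equiv0\pmod4$ (for even $a$) hold; this is what powers the mod-$4$ arguments, and is exactly what fails at $m=1$, consistently with $A_{p,1}$ being singular. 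Finally, by Proposition~\ref{pro:pri2} one has $\{v_q,v_{3q}\}=\{r^m,(p-r)^m\}$, so $P(r)=v_q+v_{3q}$; checking the parity and residue of the odd member of $\{r,p-r\}$ shows that the hypothesis $r\equiv0$ or $1\pmod4$ is equivalent to $P(r)\equiv1\pmod4$, whereas $P(1)\equiv1\pmod4$ unconditionally.

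Next I would dispose of the two pure power-of-two identities, which are ``balanced'' and so cannot be settled by size. The $d=4$ identity $\sum_jv_{4j}=\sum_jv_{4j+2}$ fails already modulo $2$: the two sides reduce to the numbers of odd residues in the two quartic cosets, and these sum to the odd number $q$, hence differ in parity. The $d=2$ identity $\sum_{\ell\,\text{even}}v_\ell=\sum_{\ell\,\text{odd}}v_\ell$ amounts to $\Delta_m:=\sum_{\mathrm{QR}}a^m-\sum_{\mathrm{QNR}}a^m=0$; reducing modulo $4$ and using the residue-class counts (with $n_1,n_3$ and $n_1',n_3'$ the numbers of residues, resp.\ non-residues, $\equiv1,3\pmod4$, one has $n_1+n_3=q$ odd together with the forced relations $n_1'=n_3$, $n_3'=n_1$) gives $\Delta_m\equiv2(n_1-n_3)\equiv2\pmod4\ne0$. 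Neither identity needs the hypothesis on $r$.

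For the three remaining identities I would first pass to pairing form via (\ref{eq:relation}) and properties \ref{Ram:prod}, \ref{Ram:power}: a short computation turns $d=q$ into $q\big(P(1)+P(r)\big)=T$, $d=2q$ into $q\big(P(1)-P(r)\big)=\Delta_m$, and $d=4q$ into $\sigma_2-\sigma_0=q\big((4q)^m-1\big)$, where $T=\sum_{a=1}^{p-1}a^m$ and $\sigma_0=\sum_jv_{4j}$, $\sigma_2=\sum_jv_{4j+2}$. The $d=4q$ identity falls to a crude size bound, $\sigma_2-\sigma_0\le(4q)^m+(q-1)(4q-1)^m-1<q((4q)^m-1)$ for $m\ge3$, so it cannot hold (no hypothesis on $r$ needed). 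The $d=q$ and $d=2q$ identities I would kill modulo $4$: since $P(1)\equiv P(r)\equiv1$, $T\equiv0$, and $\Delta_m\equiv2\pmod4$, the $d=q$ identity would force $2q\equiv0$, i.e.\ $2\equiv0$, and the $d=2q$ identity would force $0\equiv2$, both absurd. I expect the $d=2q$ identity to be the main obstacle and the sole reason for the hypothesis: here the dominant term $(4q)^m$ cancels between the two sides and $\Delta_m$ is a character-type sum with no usable size estimate, so only the congruence argument survives, and that argument works precisely because $P(r)\equiv1\pmod4$, that is, exactly when $r\equiv0$ or $1\pmod4$.
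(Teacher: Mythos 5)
Your proof is correct, and its skeleton is the same as the paper's: pass to $G_{p,m}$ via Theorem~\ref{thm:Gpm}, invoke Corollary~\ref{cor:main}, and rule out the five nontrivial identities of $\mathcal{E}_{4q}$ from \ref{type:2kq}. Your handling of $d=2$, $d=4$ and $d=2q$ is the paper's argument in different notation: the count of exactly $q$ odd quadratic residues, the parity of that count, and the mod-$4$ computation in which the hypothesis on $r$ enters precisely as $P(r)\equiv 1$ modulo $4$. The one genuine divergence is at $d=q$: the paper eliminates that identity by a size estimate, bounding $\sum_{j=1}^{4q}j^m$ above by $\int_1^{2q+1}\bigl(x^m+(4q+1-x)^m\bigr)\,dx$ and showing this lies below $q\bigl(P(1)+P(r)\bigr)$ --- an argument that needs $m\ge 3$ but no assumption on $r$ --- whereas you eliminate it modulo $4$, which is shorter and unifies it with the $d=2q$ case, but consumes the hypothesis on $r$ a second time. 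The trade-off is immaterial here since the hypothesis is available anyway, though the paper's route records the slightly stronger fact that the $d=q$ identity fails for every prime $p=4q+1$ regardless of the residue of $2^q$. Finally, your $d=4q$ bound $\sigma_2-\sigma_0\le (4q)^m+(q-1)(4q-1)^m-1<q\bigl((4q)^m-1\bigr)$ is a coarser but equally valid replacement for the paper's term-by-term comparison $v_{2q}+v_{4j}>v_0+v_{4j+2}$.
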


%\begin{proof}[Proof of Theorem~\ref{thm:4q}]
\begin{proof}
We proceed similar to the proof of Theorem~\ref{thm:2q}. 
Suppose that $A_{p,m}$ satisfies the given assumptions. By Theorem~\ref{thm:Gpm}, there exists a circulant matrix $G_{p,m} = circ\{(v_0,v_1, \cdots, v_{4q-1})\}$, with $v_j := (h^j(\mod\; p))^m$, $j=0,1,\dots, 4q-1$, similar to $A_{p,m}$, for some primitive $h$ of $\mathbb{Z}_{p}$.
Noting that $v_0=1$, $v_{2q}=(4q)^m$ and $v_0, v_1, \cdots, v_{4q-1}$ are all positive, we see $\sum_{j = 0}^{4q-1}v_j > 0$. Thus, by Corollary~\ref{cor:main}, we can conclude that $G_{p,m}$ (hence $A_{p,m}$) is nonsingular if we show that $v_0, v_1, \cdots, v_{4q-1}$ do not satisfy any of the following nontrivial identities in $\mathcal{E}_{4q}$ (c.f. \ref{type:2kq} in Section~\ref{sec:Explicit}):
\begin{enumerate}[label=(\roman*)]
\item \label{item:cond1}
$\sum_{j = 0}^{2q-1} v_{2j} = \sum_{j=0}^{2q-1}v_{2j+1}$
\item \label{item:cond2}
$\sum_{j = 0}^{q-1}v_{4j} = \sum_{j = 0}^{q-1} v_{4j+2} $
\item \label{item:cond3}
$(q-1)(v_0+v_q+v_{2q}+v_{3q}) = \sum_{j=1}^{q-1}(v_j+v_{q+j}+v_{2q+j}+v_{3q+j})$
\item \label{item:cond4} 
$(q-1)(v_0+v_{2q}) + \sum_{j=0}^{(q-3)/2}(v_{2j+1} +v_{2q+2j+1})+ \sum_{j=(q+1)/2}^{q-1}(v_{2j+1} +v_{2q+2j+1}) $\\ $= (q-1)(v_q+v_{3q}) + \sum_{j=1}^{q-1}(v_{2j} +v_{2q+2j})$
\item \label{item:cond5} 
$(q-1)v_0+\sum_{j=0}^{(q-3)/2} v_{4j+2}+\sum_{j=(q+1)/2}^{q-1}v_{4j+2}=(q-1)v_{2q} + \sum_{j = 1}^{q-1} v_{4j}$
\end{enumerate}

To show that the identity \ref{item:cond1} does not hold true, we observe that for the set $\{h^j(\mod\; p): j=0,1,\dots, 4q-1\}=\{1,2,\dots, 4q\}$, there are exactly $q$ elements in each of the four residue classes modulo 4. 
Note that $m\ge 3$ is odd. 
Since odd power of each element in an odd residue class belongs to the same class, the sum $\sum_{j = 0}^{4q-1}v_j=1^m + 2^m + \cdots + (4q)^m$ is divisible by 4.
Since odd power of each element in an odd residue class belongs to the same class, the sum $\sum_{j = 0}^{4q-1}v_j=1^m + 2^m + \cdots + (4q)^m$ is divisible by 4.
Suppose that the identity \ref{item:cond1} holds true. Let $\mathfrak{r}_1$ and $\mathfrak{r}_3$ be the size of $\{v_{2j}=1 (\mod\; 4):j=0,1,\cdots,2q-1\}$ and $\{v_{2j}=3 (\mod\; 4):j=0,1,\cdots,2q-1\}$, respectively. Then $\mathfrak{r}_1+\mathfrak{r}_3=q$, and the size of $\{v_{2j+1}=1 (\mod\; 4):j=0,1,\cdots,2q-1\}$ and $\{v_{2j+1}=3 (\mod\; 4):j=0,1,\cdots,2q-1\}$ have to be $q-\mathfrak{r}_1$ and $q-\mathfrak{r}_3$, respectively.  Note that $\mathfrak{r}_1+3\mathfrak{r}_3$ and $q-\mathfrak{r}_1+3(q-\mathfrak{r}_3) = 4q-(\mathfrak{r}_1+3\mathfrak{r}_3)$ are odd. However, each of these numbers must be even, since their sum is divisible by 4 and since the two summations $\sum_{j = 0}^{2q-1} v_{2j}$ and $\sum_{j=0}^{2q-1}v_{2j+1}$ in \ref{item:cond1} are assumed to be the same. This is a contradiction, hence we conclude that the identity \ref{item:cond1} does not hold true. 

For the next identity, note that $(h^{j'}(\mod\; p)) = 4q+1-(h^{j''} (\mod \; p))$ for any $j' = 2q+j''$ (c.f.~(\ref{eq:relation})). 
Hence for any integer $1\le a\le 2q$, the terms $a^m$ and $ (4q+1-a)^m$ either appear simultaneously or none of them appears in the sums $\sum_{j = 0}^{q-1}v_{4j}$ and $\sum_{j = 0}^{q-1} v_{4j+2} $ of the identity \ref{item:cond2}.  Also note that, in the pair $\{a^m,(4q+1-a)^m\}$, exactly one of them is even and the other is odd. It means that there are exactly $q$ odd terms in $v_{2j}$'s. Therefore, 
%using a similar argument as in the proof of Theorem~\ref{thm:2q} for the identity (\ref{eq:2qCond1}), we see that 
the two sums 
%$\sum_{j = 0}^{q-1}v_{4j}$ and $\sum_{j = 0}^{q-1} v_{4j+2} $ 
cannot be the same and the identity  \ref{item:cond2} cannot hold.

We can rewrite the identity \ref{item:cond3} as $q(v_0+v_q+v_{2q}+v_{3q}) = \sum_{j=0}^{4q-1} v_j$. Since $\sum_{j = 0}^{4q-1}v_j=\sum_{j=1}^{4q}j^m$, we see that $\sum_{j = 0}^{4q-1}v_j<\int_1^{2q+1}x^m + (4q+1-x)^m\ dx$. We claim that this integral is strictly less than $q(v_0+v_q+v_{2q}+v_{3q})$. 
To see this, note that 
\begin{eqnarray*}
&{\;}&q(v_0+v_q+v_{2q}+v_{3q})-\int_1^{2q+1}x^m + (4q+1-x)^m\ dx\\
&{\;}&=q[1+(4q)^m+a^m+(4q+1-a)^m]-\frac{(2q+1)^{m+1} - (2q)^{m+1} - 1 + (4q)^{m+1}}{m+1}
\end{eqnarray*} 
for some integer $1 < a < 4q$. 
Since $m \ge 3$, $4^mq^{m+1} \ge \frac{4^{m+1}}{m+1}q^{m+1}$. Also, it is easy to see that $\max\{a, 4q+1-a\} \ge 2q+1$ since if $a < 2q+1$ and $4q+1-a < 2q+1$ then $2q < a < 2q+1$, which is a contradiction.
Hence, either $a \ge 2q+1$ or $4q+1-a \ge 2q+1$, say $a \ge 2q+1$. Then $qa^m - \frac{1}{m+1}(2q+1)^{m+1} \ge q(2q+1)^m - \frac{2q+1}{m+1}(2q+1)^m > 0$ since $q > \frac{2q+1}{m+1}$. Similarly, we get $q(4q+1-a)^m - \frac{1}{m+1}(2q+1)^{m+1} > 0$ if $4q+1-a \ge 2q+1$. In conclusion, $q[(4q)^m + a^m + (4q+1-a)^m]- \frac{1}{m+1}[(2q+1)^{m+1} + (4q)^{m+1}]$ is already positive, hence $q(v_0+v_q+v_{2q}+v_{3q})>\sum_{j=0}^{4q-1} v_j$.

%Since $m \ge 3$, we see that $4^mq^{m+1} \ge \frac{4^{m+1}}{m+1}q^{m+1}$. Also, if $a < 2q+1$ and $4q+1-a < 2q+1$, then $2q < a < 2q+1$, which cannot happen since $a$ is an integer. 
%{\color{red}Hence, either $a \ge 2q+1$ or $4q+1-a \ge 2q+1$, i.e. $a\le 2q$. If $a \ge 2q+1$, then $qa^m - \frac{1}{m+1}(2q+1)^{m+1} \ge q(2q+1)^m - \frac{2q+1}{m+1}(2q+1)^m > 0$ since $m > \frac{q+1}{q}$.}
%\vskip 0.1in
%\medskip 

The identity \ref{item:cond5} does not hold, since $v_{2q} + v_{4j} > v_{2q}+v_0 > v_{4j+2}+v_0$ for every $j$.

We use the assumption on $r$, which has not yet been used, for the identity \ref{item:cond4}. It is known that $\sum_{j = 1}^{N}j^m$ is divisible by ${N(N+1)}/{2}$ when $m$ is odd (\cite{macmillan12}). In particular, if $N = 4q$, then $\sum_{j = 1}^{4q}j^m$ is divisible by ${4q(4q+1)}/{2}= 2qp$. Also from the argument for the identity \ref{item:cond1}, we know that $\sum_{j = 1}^{4q}j^m$ is divisible by $4$, so it is divisible by $4pq$. By letting $\alpha = \sum_{j = 0}^{2q-1} v_{2j+1}$ and $\beta=\sum_{j = 0}^{2q-1}v_{2j}$ we have $\alpha+\beta = \sum_{j = 1}^{4q}j^m$. Since $\alpha$ and $\beta$ are both odd, using again the fact that $\sum_{j = 1}^{4q}j^m$ is divisible by $4$, we know that $\alpha (\mod\; 4) \neq \beta (\mod\; 4)$ and $\alpha$ and $\beta$ are both odd. In other words, $\alpha (\mod\; 4) = 1$ and $\beta (\mod\; 4) = 3$ or vice versa. 

We rewrite the identity \ref{item:cond4} as follows: $q(v_0+v_{2q}) + \sum_{j = 0}^{2q-1} v_{2j+1} = q(v_q+v_{3q}) + \sum_{j = 0}^{2q-1}v_{2j}$. In other words, the identity becomes $q[1+(4q)^m] + \alpha = q[r^m+(p-r)^m] + \beta$. By Proposition~\ref{pro:primitive} and Proposition~\ref{pro:pri2}, the sum $\sum_{j =0}^{2q-1} v_{2j+1}$ is the sum of the $m$-th power of all the primitive elements, $r$, and $p-r$. By the given condition, we have that either $r  = 0(\mod\; 4)$ or $r= 1 (\mod\; 4)$. If $r$ is divisible by 4, then $p-r = 4q+1-r$ is congruent to 1 modulo $4$. Hence $r^m$ is a multiple of 4 and $(p-r)^m$ is congruent to 1 modulo $4$. So the left-hand side in the above condition is congruent to $(q+\alpha) (\mod\; 4)$ and the right-hand side is congruent to $(q+\beta) (\mod\; 4)$, but this cannot happen since $\alpha (\mod\; 4) \neq \beta (\mod\; 4)$. Hence the identity \ref{item:cond4}  cannot hold true. If $r$ is congruent to 1 modulo $4$, then $(p-r)^m$ is a multiple of 4. By the same argument as the previous case, the identity \ref{item:cond4} cannot hold.  %\vskip 0.15in
\end{proof}

\begin{table}
\centering
\begin{tabular}{c|c|c|c|c|c|c|c|c|c}
$14$&$\diamond$&$\diamond$&  $\star$&&  & $\star$  & & & $\star$\\ \hline
$13$&$\diamond$&$\diamond$&$\star$&$\star\star$&  & $\star$  & $\star\star$&  & $\star$\\ \hline
$12$&$\diamond$&$\diamond$&$\star$& &  & $\star$  & & &  $\star$\\ \hline
$11$&$\diamond$& $\diamond$& $\star$&$\star\star$&  &$\star$&$\star\star$&&  $\star$\\ \hline
$10$&$\diamond$&$\diamond$&$\star$&  &  & $\star$& & &$\star$\\ \hline
$9$ &$\diamond$&$\diamond$& $\star$&$\star\star$&  &$\star$&$\star\star$&& $\star$\\ \hline
$8$ &$\diamond$&$\star$&$\star$& & &$\star$&  && $\star$\\ \hline
$7$ &$\diamond$&$\star$&$\star$&$\star\star$& &$\star$&$\star\star$&&  $\star$\\ \hline
$6$ &$\diamond$& $\star$& $\star$ & &   & $\star$&&& $\star$\\ \hline
$5$ &$\diamond$& $\star$&$\star$&$\star\star$ &   &$\star$&$\star\star$&  & $\star$\\ \hline
$4$ &$\diamond$&$\star$&$\star$&  &   &$\star$& & & $\star$\\ \hline
$3$ & &$\star$ &$\star$&$\star\star$ & &$\star$&$\star\star$& &$\star$\\ \hline
$2$ & &$\star$&$\star$& &   &$\star$& & &$\star$\\ \hline
\slashbox{$m$}{$p$}&$5$&$7$&$11$&$13$&$17, 19$&$23$&$29$&$31, 37, 41, 43$&$47$\\ 
\end{tabular}
\medskip
\caption{First thirteen parameters of $p\ge 5$ and $m\ge 2$ for the matrix $A_{p,m}$ in (\ref{eq:Apm}), showing which result guarantees the invertibility of $A_{p,m}$. 
For each $p\ge 5$, any $A_{p,m}$ with  $m\geq \log(p-2)/\log((p-1)/(p-2))$ is nonsingular by Theorem~\ref{thm:Apm} from \cite{GMD}, and these parameters are shown with $\diamond$. 
For each $p$ of the form $p=2q+1$ with $q$ an odd prime, any $A_{p,m}$ is nonsingular by Theorem~\ref{thm:2q}, and these parameters are shown with $\star$.
For each $p$ of the form $p=4q+1$ with $q$ an odd prime, any $A_{p,m}$ with $2^q (\mod\; p)=0$ or $1$ in modulo $4$ is nonsingular by Theorem~\ref{thm:4q}, and these parameters are shown with $\star\star$.}
\label{table:pm}
\end{table}

From Theorem~\ref{thm:4q}, we see that, for example, when $p=13, 29$, $A_{p,m}$ is nonsingular for all odd $m\ge 3$. For the list of pairs $(q, p)$ up to $q \le 200$, the following seven cases $(3, 13)$, $(7, 29)$, $(37, 149)$, $ (43, 173)$, $(207, 509)$, $(163, 653)$, $(193, 773)$ satisfy the condition of the theorem.  There are six other cases up to $q \le 200$ with $(2^q (\mod\; p)) (\mod\; 4) = 3$. We cannot apply our theorem for these cases.

Table~\ref{table:pm} indicates the parameters $p$ and $m$ for which $A_{p,m}$ is invertible using different symbols for different results that guarantee the invertibility. We see that, although our results (Theorem~\ref{thm:2q}, Theorem~\ref{thm:4q}) in this section do not answer the question of invertibility of $A_{p,m}$ completely, they provide a good complement to the earlier result (Theorem~\ref{thm:Apm}).

\subsection{Application to circulant $0, 1$-matrices}
\label{subS:01circulant}
Various properties of circulant $0, 1$-matrices are particularly useful in Communication and Coding theory, Graph theory, and many other areas.  
In \cite{chen21} the author studied the condition for circulant $0, 1$-matrices to be nonsingular, which is useful in Communication theory and Coding theory. The main problem considered there is to determine the condition for the circulant $0, 1$-matrix with $k$ ones and $k+1$ zeros in each row to be nonsingular. Using associated polynomial and some properties of cyclotomic polynomials, the following result was obtained. If  $2k+1 = p^t$ with $p$ prime and $t$ positive integer, or $2k+1 = pq$ with $p$ and $q$ distinct odd primes, then such circulant matrix is always nonsingular regardless of arrangement of $k$ ones and $k+1$ zeros. On the other hand, when $2k+1=pqr$, with three distinct odd primes $p, q$ and $r$, there exists a circulant $0, 1$-matrix with $k$ ones and $k+1$ zeros that is singular.
%\vskip 0.1in
\medskip 

For circulant $0, 1$-matrices, the number of ones (and so zeros as well) and the arrangements of ones and zeros determine invertibility. We apply our results to figure out invertibility of circulant $0, 1$-matrices with variations regarding on the size and arrangement. 

\begin{theorem} 
 Let $V$ be a circulant $0, 1$-matrix with $m$ ones and $n-m$ zeros in the first row. Suppose that $n = p^t$ for prime $p$ and positive integer $t$. If either $1 \le m \le p-1$ or $1 \le p^t-m \le p-1$, then the matrix $V$ is nonsingular.
\end{theorem}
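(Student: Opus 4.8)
The plan is to apply Corollary~\ref{cor:main} after specializing the explicit description of $\mathcal{E}_{p^t}$ already worked out in \ref{type:qk}. Since $n=p^t$ is exactly the Type~I case (with $q=p$ and $k=t$), the set $\mathcal{E}_{p^t}$ is the union of $\mathcal{E}_{p^t}^1=\{\mathbf{v}:\sum_j v_j=0\}$ together with, for each $1\le s\le t$, the solution set of the identity (\ref{eq:qkCond}),
\[
(p-1)\sum_{j=0}^{p^{t-s}-1}v_{p^sj}=\sum_{j=0}^{p^{t-s}-1}\bigl(v_{p^sj+p^{s-1}}+\cdots+v_{p^sj+(p-1)p^{s-1}}\bigr).
\]
My first step is to read this combinatorially: for a $0,1$-vector its left-hand side is $(p-1)$ times the number of ones sitting in positions divisible by $p^s$, while its right-hand side is the number of ones sitting in positions divisible by $p^{s-1}$ but not by $p^s$. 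Thus membership in $\mathcal{E}_{p^t}$ is governed, for each $s$, by a balance $(p-1)S_0=S_{\ne 0}$ between these two one-counts.

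Next I would invoke the fact that singularity is invariant under a cyclic rotation of the first row, exactly as used in the proof of Corollary~\ref{cor:q}; it therefore suffices to exhibit a single rotation $\mathbf{v}'$ of $\mathbf{v}$ that lies outside $\mathcal{E}_{p^t}$. In the regime $1\le m\le p-1$ I would rotate so that $v'_0=1$, which is possible because $m\ge 1$ forces at least one entry equal to $1$. Then $\sum_j v'_j=m\ge 1$ excludes $\mathcal{E}_{p^t}^1$, and for each $s$ the left-hand side of (\ref{eq:qkCond}) is at least $p-1$, since the one at position $0$ lies in every class $p^s\mid j$; meanwhile its right-hand side counts ones among the remaining positions only, hence is at most $m-1\le p-2$. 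The strict inequality $p-1>p-2$ shows no identity (\ref{eq:qkCond}) can hold, so $\mathbf{v}'\notin\mathcal{E}_{p^t}$ and $V$ is nonsingular.

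For the complementary regime $1\le p^t-m\le p-1$ I would run the mirror argument on the zeros. Replacing each one-count by (number of positions) minus (number of zeros) in (\ref{eq:qkCond}) and simplifying—the position-counts on the two sides differ by precisely the factor $p-1$—the identity collapses to $(p-1)Z_0=Z_{\ne 0}$, where $Z_0$ and $Z_{\ne 0}$ count the zeros in positions divisible by $p^s$, respectively by $p^{s-1}$ but not by $p^s$. Rotating so that $v'_0=0$ (possible since there is at least one zero) makes $Z_0\ge 1$, so the left side is $\ge p-1$ while the right side is at most $(p^t-m)-1\le p-2$; the same strict inequality closes this case, and $\sum_j v'_j=m=p^t-(p^t-m)>0$ again excludes $\mathcal{E}_{p^t}^1$.

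The computations involved are all elementary; the one genuinely load-bearing idea is the rotation. Without it the balance $(p-1)S_0=S_{\ne 0}$ can hold vacuously (namely $S_0=S_{\ne 0}=0$) whenever no one happens to land on a position divisible by $p^{s-1}$, so the raw counting bound does not by itself exclude membership in $\mathcal{E}_{p^t}$. Placing a one (resp.\ a zero) at position $0$, which is divisible by every $p^s$, forces the dominant count $S_0$ (resp.\ $Z_0$) to be nonzero, and the smallness hypothesis $m\le p-1$ (resp.\ $p^t-m\le p-1$) then makes the two sides of every identity strictly unequal. I expect the short reduction of (\ref{eq:qkCond}) to its zero-count form to be the only step requiring a careful, if routine, calculation.
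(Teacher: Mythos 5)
Your proposal is correct and follows essentially the same route as the paper: specialize the Type~I identities, normalize by a cyclic rotation so that a one (resp.\ a zero) sits at position $0$ (the paper does this silently via ``without loss of generality''), and derive a counting contradiction from $m\le p-1$ (resp.\ $p^t-m\le p-1$). The only cosmetic difference is in the second case, where you first collapse the identity to the zero-count form $(p-1)Z_0=Z_{\ne 0}$ and reuse the first case's argument symmetrically, while the paper instead bounds the two sides of (\ref{eq:01Cond}) directly; both are the same elementary counting idea.
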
 

\begin{proof}
This is \ref{type:qk} or \ref{type:2k} in Section~\ref{sec:Explicit}. First of all, note that $\sum_{j = 1}^n v_j = m$ which is nonzero. Now suppose that $d = p^s$ for some $1 \le s \le t$. We show that the following identity cannot hold:
%\[ (p-1)\sum_{j = 0}^{p^{t-s}-1}v_{p^sj} = \sum_{p \nmid j \atop 1\le j \le p^{t-s+1}-1} v_{p^{t-1}j} \ \ \ \ \  (**) \]
%\[ (p-1)\sum_{j = 0}^{p^{t-s}-1}v_{p^sj} = \sum_{\genfrac{}{}{0pt}{}{p \nmid j}{1\le j \le p^{t-s+1}-1}} v_{p^{t-1}j} \ \ \ \ \  (**) \]

\begin{equation}
\label{eq:01Cond}
(p-1)\sum_{j = 0}^{p^{t-s}-1}v_{jp^s} 
= \sum_{j = 0}^{p^{t-s}-1}\left( v_{jp^s+p^{s-1}}+v_{jp^s+2p^{s-1}}+ \cdots +  v_{jp^s+(p-1)p^{s-1}}\right).
\end{equation}

Assume that $1 \le m \le p-1$. Without loss of generality we assume that $v_0 = 1$. If the identity (\ref{eq:01Cond}) holds, then the left-hand side is $(p-1)a$ and the right-hand side is at most $m-a$, where $a$ is the number of ones in the summation in the left-hand side, which implies $(p-1)a \le m-a$. Hence $p \le pa \le m \le p-1$ and this is a contradiction.  Therefore, the identity (\ref{eq:01Cond}) cannot hold, hence the circulant $0, 1$-matrix $V$ is nonsingular in this case. 

If $1 \le p^t-m \le p-1$, then there is at least one zero in each row so we assume that $v_0 = 0$. Since $v_0 = 0$ and the number of zeros is at most $p-1$, the number of zeros in the right hand side is at most $p-2$. It means that the right-hand side is at least $p^{t-s+1}-p^{t-s} - (p-2)$. Also the left-hand side is at most $(p-1)(p^{t-s} -1)$. If the identity (\ref{eq:01Cond}) holds, then $p^{t-s+1}-p^{t-s} - (p-2) \le (p-1)(p^{t-s} -1) $. But $p^{t-s+1}-p^{t-s}-p+ 2 > p^{t-s+1}-p^{t-s}-p+1$, hence we obtain a contradiction.
\end{proof}

%\vskip 0.25in

%\section*{Acknowledgement(s)}
%
%An unnumbered section, e.g.\ \verb"\section*{Acknowledgements}", may be used for thanks, etc.\ if required and included \emph{in the non-anonymous version} before any Notes or References.
%
%
%\section*{Disclosure statement}
%
%An unnumbered section, e.g.\ \verb"\section*{Disclosure statement}", may be used to declare any potential conflict of interest and included \emph{in the non-anonymous version} before any Notes or References, after any Acknowledgements and before any Funding information.

\section*{Funding}
The first author is supported from the Institute for Basic Science (IBS-R029-C1). The second author acknowledges financial support from the National Research Foundation of Korea [Grant Numbers 2015R1A5A1009350 and 2021R1A2C1007598]. 

\section*{Notes}
Part of the work was performed while the second author was visiting the Korea Institute for Advanced Study, Seoul, Korea.

\bibliographystyle{tfnlm}
\bibliography{bib_cir}

\end{document}